\newtheorem{thm}{Theorem}[section]
\newtheorem{prop}[thm]{Proposition}
\newtheorem{lem}[thm]{Lemma}
\newtheorem{cor}[thm]{Corollary}
\theoremstyle{remark}
\newtheorem{rem}[thm]{Remark}
\newtheorem{ex}[thm]{Example}
\newcommand{\FF}{\mathbb{F}}
\newcommand{\ZZ}{\mathbb{Z}}
\newcommand{\RR}{\mathbb{R}}
\DeclareMathOperator{\wt}{wt}
\DeclareMathOperator{\srg}{srg}
\begin{document}

\title{Unbiased weighing matrices of weight $9$}

\author{
Makoto Araya\thanks{Department of Computer Science,
Shizuoka University,
Hamamatsu 432--8011, Japan.
email: \texttt{araya@inf.shizuoka.ac.jp}},
Masaaki Harada\thanks{
Research Center for Pure and Applied Mathematics,
Graduate School of Information Sciences,
Tohoku University, Sendai 980--8579, Japan.
email: \texttt{mharada@tohoku.ac.jp}},
Hadi Kharaghani\thanks{
Department of Mathematics and Computer Science, University of Lethbridge,
Lethbridge, Alberta, T1K 3M4, Canada.
email: \texttt{kharaghani@uleth.ca}}, 
\\
Sho Suda\thanks{
Department of Mathematics, National Defense Academy of Japan, Yokosuka, Kanagawa 239--8686,
Japan.
email: \texttt{ssuda@nda.ac.jp}}
  and 
Wei-Hsuan Yu\thanks{
Mathematics Department, National Central University, Taoyuan 32001, Taiwan. 
email: \texttt{whyu@math.ncu.edu.tw}. 
}
}

\maketitle

\begin{abstract}
We investigate unbiased weighing matrices of weight $9$ 
and provide a construction method using mutually suitable Latin squares. 
For $n \le 16$, we determine the maximum size  among sets of mutually
unbiased weighing matrices of order $n$ and weight $9$.
Notably, our findings reveal that $13$ is the
smallest order where such pairs exist, and $16$ is the first order for which
a maximum class of unbiased weighing matrices is found.
\end{abstract}

\section{Introduction}\label{Sec:1}

A \emph{weighing matrix} $W$ of order $n$ and weight $k$ is an 
$n \times n$ $(1,-1,0)$-matrix $W$ such that $W W^T=kI_n$, where $I_n$ is 
the identity matrix of order $n$ and $W^T$ denotes the transpose of $W$. 
A weighing matrix of order $n$ and weight $n$ is also called a \emph{Hadamard} matrix of order $n$. 
Two Hadamard matrices $H,K$ of order $n$ are said to be
\emph{unbiased} if $(1/\sqrt{n})HK^{T}$ is also
a Hadamard matrix of order $n$.
Hadamard matrices $H_1,H_2,\ldots,H_f$ are said to be 
\emph{mutually unbiased} if any distinct two are unbiased.
Determining the
maximum size among sets of mutually unbiased Hadamard matrices is a fundamental problem.
Much work has been done concerning this fundamental problem (see, e.g., \cite{BKR}, \cite{HK}, \cite{HKO}, \cite{K11}, \cite{KSS} and the references given therein).
The notion of unbiased Hadamard matrices was generalized in~\cite{BKR} and~\cite{HKO}
as follows:
Two weighing matrices $W_1,W_2$ of order $n$ and weight $k$ are said to be 
\emph{unbiased} if $(1/\sqrt{k})W_1 W_2^{T}$ is also 
a weighing matrix of order $n$ and weight $k$.
Weighing matrices $W_1,W_2,\ldots,W_f$ are said to be 
\emph{mutually unbiased} if any distinct two are unbiased.
Inspired by the many applications of mutually unbiased Hadamard matrices,
we study mutually unbiased weighing matrices.

If there exists a pair of unbiased weighing matrices of order $n$ and weight $k$,
then $k$ must be a perfect square~\cite{BKR}.
Many examples of unbiased weighing matrices of weight $4$ are given in~\cite{BKR}
for small orders, and the maximum number of mutually unbiased weighing matrices of weight $4$ is determined in~\cite[Theorem~3.5]{NS}. 
An example of $4$ mutually unbiased weighing matrices of order $16$ and
weight $9$ is given in~\cite{HKO}.
This motivates us to study further the existence and some applications of unbiased weighing matrices of weight $9$.
In what follows, we construct mutually unbiased weighing matrices
using mutually suitable Latin squares.
For $n \le 16$, we can determine the maximum size among sets of mutually
unbiased weighing matrices of order $n$ and weight $9$.

The article is organized as follows: 
In Section~\ref{Sec:2}, we provide definitions and some known results regarding weighing 
matrices, Latin squares, and ternary codes used in this article. 
In Section~\ref{Sec:Latin}, we describe a construction method for mutually unbiased 
weighing matrices using mutually suitable Latin squares. As a special case, we show that 
there exists a set of \( q - 1 \) mutually unbiased weighing matrices of order \( 4q \) and 
weight $9$ for any prime power \( q \) with \( q \ge 4 \). 
In Section~\ref{Sec:sdp}, we demonstrate the upper bounds of the maximum size among sets 
of mutually unbiased weighing matrices of order \( n \) and weight $9$. This is achieved by 
applying linear programming and 
semidefinite programming to some spherical three-distance sets obtained from mutually 
unbiased weighing matrices of order \( n \) and weight $9$, and comparing the two methods. 
In Section~\ref{Sec:code}, for a given weighing matrix \( W_1 \), we present a method to 
obtain essentially all weighing matrices \( W_2 \) such that \( W_1, W_2 \) are unbiased by 
considering the ternary code generated by the rows of \( W_1 \). Using this method, 
computer calculations reveal the existence of unbiased weighing matrices of order \( n \) and 
weight $9$ for \( n \le 24 \) in Section~\ref{Sec:small}. 
It is then concluded that $13$ is the smallest order for which unbiased weighing matrices of 
weight $9$ exist. Additionally, we determine the maximum size among sets of mutually 
unbiased weighing matrices of order \( n \) and weight $9$ for \( n \le 16 \). Notably, we 
obtain, for the first time, a set of $15$ mutually unbiased weighing matrices of order $16$ and 
weight $9$, achieving the second upper bound in~\cite[Corollary 10]{BKR} 
(see also Proposition~\ref{prop:f}). 
Finally, 
we present the state-of-the-art on the maximum size 
among sets of mutually unbiased weighing matrices of order \( n \) and weight $9$
for $ n \le 24$.
To ensure this article is both comprehensive and accessible for readers, 
the data pertaining to unbiased weighing matrices is given in Appendix.

\section{Preliminaries}\label{Sec:2}

This section provides definitions and some known results
of weighing matrices, Latin squares and ternary codes used in this article.
Throughout this article, $I_n$ denotes the identity matrix of order $n$ 
and $O$ denotes the zero matrix of appropriate size.

\subsection{Unbiased weighing matrices}
A \emph{weighing matrix} $W$ of order $n$ and weight $k$ is an 
$n \times n$ $(1,-1,0)$-matrix $W$ such that $W W^T=kI_n$.
Two weighing matrices $W_1,W_2$ of order $n$ and weight $k$ are said to be 
\emph{equivalent} if there exist $(1,-1,0)$-monomial matrices $P, Q$ with 
$W_1=P W_2 Q$. 
We denote two equivalent weighing matrices $W_1,W_2$ by $W_1 \cong W_2$.
A classification of weighing matrices of weight $k \le 5$ and 
weighing matrices of orders $n \le 11$ was done by Chan, Rodger and Seberry~\cite{CRS86}
(see also~\cite{HM} for weight $5$).
A classification of weighing matrices of orders $12, 13$
was done by Ohmori~\cite{O89}, \cite{O93}, respectively
(see also~\cite{HM} for order $12$).
A classification of weighing matrices of orders $14, 15, 17$ was done 
in~\cite{HM}.

Two weighing matrices $W_1,W_2$ of order $n$ and weight $k$ are said to be 
\emph{unbiased} if $(1/\sqrt{k})W_1 W_2^{T}$ is also 
a weighing matrix of order $n$ and weight $k$~\cite{HKO} (see also~\cite{BKR}).
Note that to prove that weighing matrices $W_1, W_2$ of weight $k$ are
unbiased, it is enough to show that $W_1W_2^T$ is a $(k,-k,0)$-matrix.
If a pair of unbiased weighing matrices of order $n$ and weight $k$ exists,
then $k$ must be a perfect square~\cite{BKR}.
Weighing matrices $W_1,W_2,\ldots,W_f$ are said to be 
\emph{mutually unbiased} if any distinct two are unbiased.
Determining the
maximum size among sets of mutually unbiased weighing matrices is a fundamental problem.
Many examples of unbiased weighing matrices of weight $4$ are given in~\cite{BKR}
for small orders.
An example of $4$ mutually unbiased weighing matrices of order $16$ and
weight $9$ is also given in~\cite{HKO}.

Let $W_1,W_2$ be unbiased weighing matrices of order $n$ and weight $k$.
If $P,Q$ are $n \times n$ $(1,-1,0)$-monomial matrices, then
$PW_1Q, PW_2Q$ are also unbiased weighing matrices.
Thus, for the determination of the maximum size among sets of mutually
unbiased weighing matrices of order $n$ and weight $k$, 
it is sufficient to consider mutually
unbiased weighing matrices for each weighing matrix of all inequivalent weighing matrices only.

Suppose that there exists a pair of unbiased weighing matrices $W_1, W_2$ of order $n$ and $k$.
Let $D$ be a $n \times n$ diagonal matrix whose diagonal entries are $1$ or $-1$.
Then $W_1, DW_2$ are unbiased.
Since there are many such diagonal matrices, there are many pairs of unbiased weighing matrices
of order $n$ and $k$.

\subsection{Latin squares}

A \emph{Latin square} of side $n$ on symbol set $\{1,2,\ldots,n\}$ is an $n \times n$
array in which 
each cell contains a single symbol from the symbol set such that each symbol occurs
exactly once in each row and exactly once in each column.
Two Latin squares $L_1, L_2$ of side $n$ on symbol set $\{1,2,\ldots,n\}$
are called \emph{orthogonal} if 
$L_1(a,b)=L_1(c,d)$ and 
$L_2(a,b)=L_2(c,d)$ implies $a=c$ and $b=d$, where $L_i(a,b)$ denotes
the $(a,b)$-entry of $L_i$ $(i=1,2)$.
Latin squares in which every distinct pair of Latin squares is orthogonal are called 
\emph{mutually orthogonal} Latin squares. 

The following proposition provides 
the maximum size among sets of mutually 
orthogonal Latin squares and the bounds. 
\begin{prop}[{\cite[Theorems~3.25, 3.28, 3.44]{JACD}}]\label{prop:latin}
Let $N(n)$ be 
the maximum size among sets of mutually orthogonal Latin squares of side $n$. 
Then, the following hold.
\begin{enumerate}
\item $N(n)\leq n-1$ for every integer $n \ge 2$. 
\item  If $n$ is a prime power, then $N(n)=n-1$. 
\item $N(12)\geq 5$. 
\end{enumerate}
\end{prop}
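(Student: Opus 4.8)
The plan is to treat the three parts separately, since they have quite different character: (i) is a counting bound, (ii) is an algebraic construction meeting that bound, and (iii) is an isolated existence result for a non-prime-power order.

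For part (i), I would first note that orthogonality is preserved when the symbols of each Latin square are relabelled independently. Hence, given a set $L_1, \ldots, L_k$ of mutually orthogonal Latin squares of side $n$, I may normalize each $L_i$ so that its first row reads $1, 2, \ldots, n$ in order. With this normalization, superposing any two squares forces the ordered pair $(t,t)$ to occur in column $t$ for every symbol $t$. I then examine the leading entries $L_i(2,1)$ of the second rows. None can equal $1$, since $1$ already appears in the first column at position $(1,1)$; and for $i \ne j$ the values $L_i(2,1)$ and $L_j(2,1)$ must differ, because if both equalled some $s$ then the pair $(s,s)$ would occur both at cell $(2,1)$ and at cell $(1,s)$, contradicting the orthogonality of $L_i$ and $L_j$. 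Thus the $k$ entries $L_i(2,1)$ are distinct elements of $\{2, \ldots, n\}$, giving $k \le n-1$ and hence $N(n) \le n-1$.

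For part (ii), with $n = q$ a prime power I would index rows, columns, and symbols by the elements of $\FF_q$, and for each nonzero $a \in \FF_q$ define $L_a(x,y) = a x + y$. Each $L_a$ is a Latin square, since for fixed $x$ the map $y \mapsto ax + y$ is a bijection of $\FF_q$, and for fixed $y$ the map $x \mapsto ax + y$ is a bijection as $a \ne 0$. For distinct nonzero $a, b$, orthogonality of $L_a$ and $L_b$ reduces to the system $ax + y = c$, $bx + y = d$ having a unique solution for every $(c,d)$: subtracting gives $(a-b)x = c - d$ with $a - b \ne 0$, which determines $x$ and then $y$. This yields $q - 1$ mutually orthogonal Latin squares, and combining with part (i) gives $N(q) = q - 1$.

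Part (iii) is where the main obstacle lies, since $12$ is not a prime power and the elementary product (MacNeish) bound only gives $N(12) \ge \min(N(4), N(3)) = 2$. The plan here is to exhibit an explicit set of five mutually orthogonal Latin squares of side $12$ — equivalently, a transversal design $TD(7,12)$, which can be built from a suitable difference matrix over an abelian group of order $12$ by developing its columns through the group. Verifying the construction then reduces to checking that the relevant pairwise row differences each range over every group element exactly once, equivalently confirming the $\binom{5}{2} = 10$ orthogonality relations. The genuinely nontrivial point is producing the difference matrix (or the design) in the first place; once it is written down the remaining verification is a finite, mechanical check, carried out in the cited source.
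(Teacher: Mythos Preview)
Your arguments for (i) and (ii) are the standard ones and are correct; your discussion of (iii) accurately identifies that the content lies in exhibiting a $TD(7,12)$ or an equivalent difference matrix, with the verification being finite and routine. Note, however, that the paper does not prove this proposition at all: it is stated with attribution to \cite[Theorems~3.25, 3.28, 3.44]{JACD} and used as a black box, so there is no in-paper proof to compare against. Your proposal therefore goes beyond what the paper does, supplying the classical proofs that the cited handbook contains.
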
 

For our construction, we need a special presentation of mutually 
orthogonal Latin squares as follows:
Two Latin squares $L_1, L_2$ of side $n$ on symbol set $\{1,2,\ldots,n\}$
are called \emph{suitable} if every superimposition
of each row of $L_1$ on each row of $L_2$ results in only one element of the form 
$(a, a)$ for some $a \in \{1,2,\ldots,n\}$.
Latin squares in which every distinct pair of Latin squares is suitable are called 
\emph{mutually suitable} Latin squares, see~\cite[Lemma~2.3]{HKO}.

The following construction method of mutually unbiased weighing matrices
using mutually suitable Latin squares
is known.

\begin{prop}[{\cite[Theorem~2.12]{HKO}}]\label{prop:HKO}
Assume there exists a weighing matrix of order $n$ and weight $k$  
and a set of $f$ mutually suitable Latin squares of side $n$. 
Then,
there exists a set of $f+1$ mutually unbiased weighing matrices of order $n^2$ and weight $k^2$.
\end{prop}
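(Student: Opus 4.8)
The plan is to construct the $f+1$ weighing matrices of order $n^2$ explicitly from the given ingredients and then verify mutual unbiasedness by direct block computation. Let $W$ be the given weighing matrix of order $n$ and weight $k$, and let $L_1,\dots,L_f$ be the mutually suitable Latin squares of side $n$ on $\{1,\dots,n\}$. The natural first move is to build a family of permutation matrices encoding the Latin squares: for each symbol $a\in\{1,\dots,n\}$ and each Latin square $L_i$, let $P^{(i)}_a$ be the $n\times n$ permutation matrix whose $(r,c)$-entry is $1$ exactly when $L_i(r,c)=a$. Each $L_i$ thus yields the decomposition $\sum_a P^{(i)}_a = J_n$ (the all-ones matrix), with each $P^{(i)}_a$ a permutation matrix since $a$ occurs once per row and column.

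Next I would assemble $n^2\times n^2$ matrices as block matrices indexed by pairs of rows/columns of the $n\times n$ grid. A clean choice is to take the ``seed'' matrix $M_0$ built from $W$ alone (for instance the tensor-type arrangement whose $(r,c)$ block is obtained by placing $W$-entries according to a fixed reference square, e.g.\ the Cayley/addition table), and for each $i=1,\dots,f$ set $M_i$ to be the block matrix whose $(r,c)$ block is $W_{r,c}\cdot(\text{something driven by }P^{(i)})$, so that $M_i$ again has weight $k^2$. The key structural point is that each $M_i$ is a weighing matrix of order $n^2$ and weight $k^2$: this follows because $M_iM_i^T$ decomposes into block sums that telescope, using $WW^T=kI_n$ within each block and the Latin-square property $\sum_a P^{(i)}_a (P^{(i)}_a)^T = nI_n$ across blocks, producing $k^2 I_{n^2}$. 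I would verify this weight condition first, as it is the easy half.

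The main content is unbiasedness of any distinct pair $M_i,M_j$. Here I would compute the block $(r,c)$ entry of $M_iM_j^T$ and show every block is a scalar multiple of a weighing matrix, so that $(1/\sqrt{k^2})M_iM_j^T=(1/k)M_iM_j^T$ is again a $(1,-1,0)$-matrix with the right weight. The crucial input is the \emph{suitability} hypothesis: superimposing a row of $L_i$ on a row of $L_j$ yields exactly one coincidence $(a,a)$. Translated into the permutation matrices, this says that $P^{(i)}_a(P^{(j)}_b)^T$ behaves so that the products $(P^{(i)}_a)^T P^{(j)}_a$ interact to give, in each block position, exactly one $W$-entry surviving rather than a sum of several; equivalently the cross term $\sum_{a,b} \langle P^{(i)}_a, P^{(j)}_b\rangle$-type contraction collapses to a single permutation in each block because of the unique coincidence. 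This is what prevents cancellation or overcounting and forces the off-diagonal blocks of $M_iM_j^T$ to be $\pm W W^T/$(single permutation) of full weight $k$. The case $i=0$ (pairing the seed with any $M_j$) should reduce to the same computation with the reference square, which is suitable with each $L_j$ by the standard normalization.

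The hard part will be bookkeeping the suitability condition correctly at the level of matrix products: one must show that the single $(a,a)$-coincidence per row-pair is exactly the algebraic fact making each off-diagonal block of $M_iM_j^T$ equal to a signed permutation scaled by $k$, with no residual sum over multiple symbols. I expect the cleanest route is to fix two row-indices $r,r'$ of the grid and track which block of the product they contribute to, checking that suitability guarantees precisely one matching symbol and hence one nonzero $\pm k$ in the corresponding position, while orthogonality/Latin structure guarantees the remaining entries vanish. Once the single-block computation is pinned down, the full $n^2\times n^2$ verification is a routine reassembly, and the conclusion that $(1/k)M_iM_j^T$ is a weighing matrix of weight $k^2$ follows, giving the desired $f+1$ mutually unbiased weighing matrices.
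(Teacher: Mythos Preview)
Your proposal has a genuine gap: you never actually define the matrices $M_0,M_1,\dots,M_f$. Phrases like ``the $(r,c)$ block is $W_{r,c}\cdot(\text{something driven by }P^{(i)})$'' and ``tensor-type arrangement \dots\ according to a fixed reference square'' are placeholders, not constructions. Without a concrete definition there is nothing to verify, and the heuristics you give do not obviously point to a working choice. For instance, if the $(r,c)$ block were literally $W_{r,c}$ times a permutation matrix, each row of $M_i$ would have weight $kn$, not $k^2$. You also assert that the seed $M_0$ comes from a ``reference square \dots\ which is suitable with each $L_j$ by the standard normalization''; but suitability of $L_1,\dots,L_f$ among themselves gives no reason for any particular extra Latin square to be suitable with all of them, so this step needs an actual argument or a different construction of the $(f{+}1)$-st matrix.

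The paper's route (carried out in Lemma~\ref{lem:ah}, Lemma~\ref{lem:ls}, Theorem~\ref{thm:msls} and the Remark following it) avoids both problems by a different, fully explicit construction. Let $w_1,\dots,w_n$ be the columns of $W$ and set $C_a=w_aw_a^T$. For each Latin square $L_m$ define $\widetilde{W}_m$ to be the block matrix whose $(i,j)$-block is $C_{L_m(i,j)}$. The identities $C_iC_j=O$ for $i\neq j$, $C_i^2=kC_i$, and $\sum_i C_i=kI_n$ make the weighing and unbiasedness checks one-line computations: suitability forces the $(i,j)$-block of $\widetilde{W}_{m_1}\widetilde{W}_{m_2}^T$ to collapse to a single term $kC_a$. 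The extra $(f{+}1)$-st matrix is not obtained from a further Latin square at all but is defined directly as $W'=(w_jw_i^T)_{i,j=1}^n$; its unbiasedness with each $\widetilde{W}_m$ follows from the Latin property of $L_m$ alone, with no suitability hypothesis on an auxiliary square. I would recommend abandoning the permutation-matrix scaffolding and working with the rank-one blocks $C_a$, which carry exactly the algebra needed.
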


As an example, a set of $4$ mutually unbiased weighing matrices of order $16$ and
weight $9$ is obtained~\cite{HKO}.

\subsection{Ternary self-orthogonal codes}
Let $\FF_3=\{0,1,2\}$ denote the finite field of order $3$.
A \emph{ternary} $[n,k]$ \emph{code} $C$ is a $k$-dimensional vector subspace
of $\FF_3^n$.
The parameters $n, k$ are called the \emph{length}, \emph{dimension} for $C$,
respectively.
The \emph{weight} $\wt(x)$ of a vector $x$ of $\FF_3^n$ is 
the number of non-zero components of $x$.
A vector of $C$ is called a \emph{codeword}.
The minimum non-zero weight of all codewords in $C$ is called
the \emph{minimum weight} of $C$.
Two codes $C,C'$ are \emph{equivalent} if there exists a
monomial matrix $P$ over $\mathbb{F}_3$ with $C' = C \cdot P$,
where $C \cdot P = \{ x P\mid x \in C\}$.
%

The \emph{dual code} $C^{\perp}$ of a code
$C$ of length $n$ is defined as
$
C^{\perp}=
\{x \in \FF_3^n \mid x \cdot y = 0 \text{ for all } y \in C\},
$
where $x \cdot y$ is the standard inner product.
A code $C$ is \emph{self-orthogonal} if $C \subset C^\perp$, and
a code $C$ is \emph{self-dual} if $C=C^\perp$.
A self-orthogonal code $C$ is \emph{maximal} if $C$ is
the only self-orthogonal code containing $C$.
A self-dual code is automatically maximal.
A maximal self-orthogonal code of length $n$
has dimension $(n-1)/2$ if $n$ is odd,
$n/2$ if $n \equiv 0 \pmod 4$ and 
$n/2-1$ if $n \equiv 2 \pmod 4$ (see~\cite{MPS}).

In the context of weighing matrices, we consider the elements $0,1,2$ of $\FF_3$ as 
$0,1,-1 \in \ZZ$, respectively, and
in the context of ternary codes, we consider the elements $0,1,-1$ of $\ZZ$ as 
$0,1,2 \in \FF_3$, respectively, unless otherwise specified.
For a weighing matrix $W$,
we denote by $C_3(W)$ the ternary code generated by the rows of $W$ throughout this article.

\section{Unbiased weighing matrices and Latin squares}
\label{Sec:Latin}

In this section, we give a construction method of mutually unbiased weighing matrices
by using mutually suitable Latin squares.

\begin{lem}\label{lem:ah}
Let $W$ be a weighing matrix of order $n$ and weight $k$ with $i$-th column $w_i$. 
Define $C_i=w_i w_i^T$ for $i$ with $1\leq i\leq n$. 
Then, the following hold. 
\begin{enumerate}
\item $C_i$ is symmetric  for $i$ with $1\leq i\leq n$. 
\item $C_i C_j=O$ for $i,j$ with $1\leq i\neq j\leq n$.
\item $C_i^2=k C_i$ for $i$ with $1\leq i\leq n$. 
\item $\sum_{i=1}^n C_i=k I_n$. 
\end{enumerate} 
\end{lem}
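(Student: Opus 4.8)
The plan is to reduce all four statements to the orthogonality relations satisfied by the columns of $W$. The single fact I would establish first is that, since $W$ is a square matrix satisfying $WW^T = kI_n$, the scaled transpose $\tfrac{1}{k}W^T$ is a right inverse of $W$ and is therefore also a left inverse, so that $W^TW = kI_n$ as well. Read entrywise, this says precisely that $w_i^T w_j = k\delta_{ij}$; that is, $w_i^T w_i = k$ for each $i$, and $w_i^T w_j = 0$ whenever $i \neq j$. Everything else is then a routine manipulation using the associativity of the outer products $C_i = w_i w_i^T$.

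With these relations in hand I would verify the four items directly. For (i), $C_i^T = (w_i w_i^T)^T = w_i w_i^T = C_i$, so $C_i$ is symmetric. For (ii), regrouping the product gives $C_iC_j = w_i(w_i^T w_j)w_j^T$, and the scalar $w_i^T w_j$ vanishes for $i \neq j$, so $C_iC_j = O$. For (iii), the same regrouping yields $C_i^2 = w_i(w_i^T w_i)w_i^T = k\,w_i w_i^T = kC_i$. For (iv), I would recognize that $\sum_{i=1}^n w_i w_i^T$ is the expansion of $WW^T$ as a sum of outer products over the columns of $W$; comparing entrywise, both $\sum_{i=1}^n C_i$ and $WW^T$ have $(a,b)$-entry equal to $\sum_i (w_i)_a (w_i)_b$, whence $\sum_{i=1}^n C_i = WW^T = kI_n$.

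I do not anticipate a genuine obstacle, as each identity follows from associativity together with the column relations. The only point requiring a moment's care is the passage from $WW^T = kI_n$ to $W^TW = kI_n$, which relies on $W$ being square; this is what converts the defining property into the column-orthogonality statement $w_i^T w_j = k\delta_{ij}$ that drives items (ii) and (iii). It is worth noting that items (i)--(iv) together assert that the matrices $\tfrac{1}{k}C_i$ form a family of pairwise orthogonal idempotents summing to the identity, i.e.\ a resolution of the identity, which is the structural feature exploited in the Latin-square construction that follows.
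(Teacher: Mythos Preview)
Your proposal is correct and is precisely the kind of routine verification the paper has in mind; the paper itself simply writes ``The straightforward proof is omitted.'' Your only substantive step---passing from $WW^T=kI_n$ to $W^TW=kI_n$ to obtain the column relations $w_i^T w_j=k\delta_{ij}$---is exactly what is needed, and the rest follows by associativity as you indicate.
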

\begin{proof}
The straightforward proof is omitted.
\end{proof}

\begin{lem}\label{lem:ls}
Let $W,C_i$ be the same as in Lemma~\ref{lem:ah}, and $L=(l(i,j))_{i,j=1}^t$ a Latin square of side $t$, where $t \ge n$. 
Set $C_i=O$ for $i$ with $n+1 \le i \le t$ if $t>n$. 
Then $\widetilde{W}=(C_{l(i,j)})_{i,j=1}^t$ is a weighing matrix of order $tn$ and weight $k^2$.
\end{lem}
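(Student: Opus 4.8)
The plan is to verify the two defining properties of a weighing matrix of order $tn$ and weight $k^2$ for the $t \times t$ block matrix $\widetilde{W}$ whose $(i,j)$ block is the $n \times n$ matrix $C_{l(i,j)}$. First I would observe that $\widetilde{W}$ has the correct size $tn \times tn$ and is a $(1,-1,0)$-matrix: since $C_s = w_s w_s^T$, the $(a,b)$-entry of $C_s$ is the product $(w_s)_a (w_s)_b$ of two entries of $W$, hence lies in $\{1,-1,0\}$, and the padding blocks $C_s = O$ for $s > n$ are trivially admissible. It then remains to prove $\widetilde{W}\widetilde{W}^T = k^2 I_{tn}$, which I would establish by computing the block entries of the product.

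Computing block by block and using the symmetry $C_s^T = C_s$ from Lemma~\ref{lem:ah}(i), the $(i,i')$ block of $\widetilde{W}\widetilde{W}^T$ equals $\sum_{j=1}^t C_{l(i,j)} C_{l(i',j)}$. For a diagonal block $i = i'$, I would invoke the Latin square property that row $i$ contains each symbol of $\{1,\ldots,t\}$ exactly once, so that as $j$ runs over $\{1,\ldots,t\}$ the index $l(i,j)$ runs over all symbols; hence the sum becomes $\sum_{s=1}^t C_s^2$. By Lemma~\ref{lem:ah}(iii) this equals $\sum_{s=1}^n k C_s$ (the terms with $s > n$ vanishing by the padding convention), and Lemma~\ref{lem:ah}(iv) then yields $k \sum_{s=1}^n C_s = k \cdot k I_n = k^2 I_n$.

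For an off-diagonal block $i \neq i'$, the key observation is that the column property of the Latin square forces $l(i,j) \neq l(i',j)$ for every $j$, since these two entries occupy distinct rows of the same column $j$. Each summand $C_{l(i,j)} C_{l(i',j)}$ is then a product of two $C_s$ with distinct indices, which vanishes either by Lemma~\ref{lem:ah}(ii) when both indices are at most $n$, or trivially when one index exceeds $n$ (so that the corresponding block is $O$). Hence every off-diagonal block is $O$, and combining this with the diagonal computation gives $\widetilde{W}\widetilde{W}^T = k^2 I_{tn}$, as required. I do not expect a genuine obstacle; the only point demanding care is the bookkeeping of the padded indices $s > n$, ensuring that the row-completeness and column-distinctness of $L$ interact correctly with the orthogonality relations $C_i C_j = O$ and the relation $C_i^2 = k C_i$ from Lemma~\ref{lem:ah}.
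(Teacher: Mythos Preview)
Your argument is correct and follows essentially the same approach as the paper: compute the $(i,j)$-block of $\widetilde{W}\widetilde{W}^T$ as $\sum_{m} C_{l(i,m)} C_{l(j,m)}^T$, then use the Latin-square row property together with Lemma~\ref{lem:ah}(i),(iii),(iv) for the diagonal blocks and the column property together with Lemma~\ref{lem:ah}(i),(ii) for the off-diagonal blocks. Your version is in fact a bit more explicit than the paper's about the role of the Latin-square axioms and the handling of the padded indices $s>n$.
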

\begin{proof}
The $(i,j)$-block of $\widetilde{W}\widetilde{W}^T$ is
\begin{align}\label{eq:ls}
\sum_{m=1}^t C_{l(i,m)}C^T_{l(j,m)}. 
\end{align}
When $i=j$, \eqref{eq:ls} is equal to $k^2 I_{n}$ by Lemma~\ref{lem:ah} (i), (iii), (iv). 
When $i\neq j$, \eqref{eq:ls} is equal to $O$ by Lemma~\ref{lem:ah} (i), (ii). 
Thus, $\widetilde{W}$ is a weighing matrix of order $tn$ and weight $k^2$. 
\end{proof}


\begin{thm}\label{thm:msls}
Assume there exists a weighing matrix of order $n$ and weight $k$, and 
a set of $f$ mutually suitable Latin squares of side $t$, where $t\geq n$.
Then
there exists a set of $f$ mutually unbiased weighing matrices of order $tn$ and weight $k^2$.
\end{thm}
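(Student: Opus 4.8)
The plan is to build the $f$ weighing matrices explicitly, one from each given Latin square, and then verify unbiasedness by a block computation mirroring the proof of Lemma~\ref{lem:ls}. Writing the mutually suitable Latin squares as $L_s = (l_s(i,j))_{i,j=1}^t$ for $s = 1, \ldots, f$, for each $s$ I apply Lemma~\ref{lem:ls} (with the convention $C_i = O$ for $n+1 \le i \le t$) to obtain a weighing matrix $\widetilde{W}_s = (C_{l_s(i,j)})_{i,j=1}^t$ of order $tn$ and weight $k^2$. It then remains to show that any two of them, say $\widetilde{W}_s$ and $\widetilde{W}_{s'}$ with $s \ne s'$, are unbiased.

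First I would compute the $(i,j)$-block of $\widetilde{W}_s \widetilde{W}_{s'}^T$, which by the symmetry in Lemma~\ref{lem:ah}(i) equals $\sum_{m=1}^t C_{l_s(i,m)} C_{l_{s'}(j,m)}$. By Lemma~\ref{lem:ah}(ii), every summand with $l_s(i,m) \ne l_{s'}(j,m)$ vanishes, so only the columns $m$ at which the two rows share a common symbol contribute. The suitability of $L_s$ and $L_{s'}$ is precisely the statement that, for each pair of rows $i, j$, there is exactly one such column $m_0$; setting $a = l_s(i,m_0) = l_{s'}(j,m_0)$, the block collapses to the single term $C_a^2 = k C_a = k\, w_a w_a^T$ by Lemma~\ref{lem:ah}(iii).

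Since the entries of $w_a w_a^T$ lie in $\{1,-1,0\}$ (and the block is simply $O$ when $a > n$), the matrix $(1/k)\widetilde{W}_s \widetilde{W}_{s'}^T$ is a $(1,-1,0)$-matrix. To confirm it is a weighing matrix of weight $k^2$, I would use that $\widetilde{W}_{s'}$ is a weighing matrix, so $\widetilde{W}_{s'}^T \widetilde{W}_{s'} = k^2 I_{tn}$, and compute
\[
\bigl(\widetilde{W}_s \widetilde{W}_{s'}^T\bigr)\bigl(\widetilde{W}_s \widetilde{W}_{s'}^T\bigr)^T
= \widetilde{W}_s \bigl(\widetilde{W}_{s'}^T \widetilde{W}_{s'}\bigr) \widetilde{W}_s^T
= k^2\, \widetilde{W}_s \widetilde{W}_s^T = k^4 I_{tn},
\]
so that $(1/k)\widetilde{W}_s \widetilde{W}_{s'}^T$ satisfies $W W^T = k^2 I_{tn}$. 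Hence $\widetilde{W}_s$ and $\widetilde{W}_{s'}$ are unbiased, and since the pair was arbitrary, the matrices $\widetilde{W}_1, \ldots, \widetilde{W}_f$ are mutually unbiased.

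I expect the one genuinely load-bearing step to be the identification of the suitability hypothesis with this block-diagonal collapse; everything else is bookkeeping on top of Lemmas~\ref{lem:ah} and~\ref{lem:ls}. The only subtlety worth flagging is the padding $C_i = O$ for $i > n$, which ensures the collapsed block $k\, w_a w_a^T$ remains a legitimate $(k,-k,0)$-block (indeed $O$) when the common symbol $a$ exceeds $n$, so that the argument goes through without needing to know whether the matched symbol lands in $\{1, \ldots, n\}$.
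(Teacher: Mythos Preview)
Your proposal is correct and follows essentially the same approach as the paper's own proof: construct $\widetilde{W}_s$ from each Latin square via Lemma~\ref{lem:ls}, compute the $(i,j)$-block of $\widetilde{W}_s\widetilde{W}_{s'}^T$, and use suitability together with Lemma~\ref{lem:ah} to collapse it to $kC_a$. Your explicit verification that $(1/k)\widetilde{W}_s\widetilde{W}_{s'}^T$ is a weighing matrix via $(\widetilde{W}_s\widetilde{W}_{s'}^T)(\widetilde{W}_s\widetilde{W}_{s'}^T)^T = k^4 I_{tn}$, and your remark on the padding case $a>n$, simply spell out steps the paper leaves implicit (the paper notes in its preliminaries that it suffices to show $W_1W_2^T$ is a $(k,-k,0)$-matrix).
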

\begin{proof}
Let $W$ be a weighing matrix of order $n$ and weight $k$, and
$\{L_1,L_2,\ldots,L_f\}$ a set of $f$ mutually suitable Latin squares
of side $t$.
Let $C_i$ be the same as in Lemma~\ref{lem:ah}. 
Let $m_1,m_2$ be distinct elements in $\{1,2,\ldots,f\}$. 
Let $l(i,j),l'(i,j)$ denote the $(i,j)$-entry of $L_{m_1},L_{m_2}$, respectively. 
Set 
\[
\widetilde{W}_{m_1}=(C_{l(i,j)})_{i,j=1}^t, \widetilde{W}_{m_2}=(C_{l'(i,j)})_{i,j=1}^t.
\] 
Since $t\geq n$,
by Lemma~\ref{lem:ls}, each $\widetilde{W}_{m_i}$ is a weighing matrix of order $tn$ and weight $k^2$
$(i=1,2)$. 

We claim that $\widetilde{W}_{m_1},\widetilde{W}_{m_2}$ are unbiased. 
We calculate the $(i,j)$-block of $\widetilde{W}_{m_1} \widetilde{W}_{m_2}^T$ as follows:
\begin{align}\label{eq:msls}
\text{The $(i,j)$-block of }\widetilde{W}_{m_1}\widetilde{W}_{m_2}^T
=\sum_{m=1}^t C_{l(i,m)}C^T_{l'(j,m)}.
\end{align}
Then there uniquely exists $a\in\{1,2,\ldots,t\}$ such that $l(i,a)=l'(j,a)$,  
and $l(i,m)\neq l'(j,m)$ for any $m\neq a$ since $L_{m_1},L_{m_2}$ are suitable. 
Then, by Lemma~\ref{lem:ah} (i)--(iii), \eqref{eq:msls} is equal to
\begin{align*}
C_{l(i,a)}C^T_{l'(j,a)}=C_{l(i,a)}^2=kC_{l(i,a)}.
\end{align*}
Since $C_{l(i,a)}$ is a $(1,-1,0)$-matrix,
$\frac{1}{k}\widetilde{W}_{m_1}{\widetilde{W}_{m_2}}^T$ is a $(1,-1,0)$-matrix. It
readily follows that $\frac{1}{k}\widetilde{W}_{m_1}{\widetilde{W}_{m_2}}^T$ is a
weighing matrix of weight $k^2$. Therefore,
$\widetilde{W}_{m_1},{\widetilde{W}_{m_2}}$ are unbiased.
\end{proof}

In order to illustrate the above theorem, we give an example.

\begin{ex}
Let $W=
\left(
\begin{array}{rrrrr}
    0 & 1 & 1 & 1 \\ 
    -1 & 0 & 1 & -1 \\  
    -1 & -1 & 0 & 1 \\ 
    -1 & 1 & -1 & 0 
\end{array}
\right)
$ be a weighing matrix of order $4$ and weight $3$. 
Let $L_1=\left(
\begin{array}{rrrrr}
 1 & 2 & 3 & 4 & 5 \\
 2 & 3 & 4 & 5 & 1 \\
 3 & 4 & 5 & 1 & 2 \\
 4 & 5 & 1 & 2 & 3 \\
 5 & 1 & 2 & 3 & 4 \\
\end{array}
\right), L_2=\left(
\begin{array}{rrrrr}
 1 & 3 & 5 & 2 & 4 \\
 2 & 4 & 1 & 3 & 5 \\
 3 & 5 & 2 & 4 & 1 \\
 4 & 1 & 3 & 5 & 2 \\
 5 & 2 & 4 & 1 & 3 \\
\end{array}
\right)$ be suitable Latin squares of side $5$. 
Then the matrices $C_i$ ($i\in\{1,2,\ldots,5\}$) are as follows: 
\begin{align*}
    C_1&=\left(
\begin{array}{rrrrr}
 0 & 0 & 0 & 0 \\
 0 & 1 & 1 & 1 \\
 0 & 1 & 1 & 1 \\
 0 & 1 & 1 & 1 \\
\end{array}
\right),
    C_2=\left(
\begin{array}{rrrrr}
 1 & 0 & -1 & 1 \\
 0 & 0 & 0 & 0 \\
 -1 & 0 & 1 & -1 \\
 1 & 0 & -1 & 1 \\
\end{array}
\right),
\\
    C_3&=\left(
\begin{array}{rrrrr}
 1 & 1 & 0 & -1 \\
 1 & 1 & 0 & -1 \\
 0 & 0 & 0 & 0 \\
 -1 & -1 & 0 & 1 \\
\end{array}
\right),
    C_4=\left(
\begin{array}{rrrrr}
 1 & -1 & 1 & 0 \\
 -1 & 1 & -1 & 0 \\
 1 & -1 & 1 & 0 \\
 0 & 0 & 0 & 0 \\
\end{array}
\right),
    C_5=O.
\end{align*}
Then the resulting unbiased weighing matrices $\widetilde{W}_1,\widetilde{W}_2$ are given as  follows: 
\begin{align*}
\widetilde{W}_1=\left(\begin{array}{ccccc}
 C_1 & C_2 & C_3 & C_4 & C_5 \\
 C_2 & C_3 & C_4 & C_5 & C_1 \\
 C_3 & C_4 & C_5 & C_1 & C_2 \\
 C_4 & C_5 & C_1 & C_2 & C_3 \\
 C_5 & C_1 & C_2 & C_3 & C_4 \\
\end{array}\right),
\widetilde{W}_2=\left(\begin{array}{ccccc}
C_1&C_3&C_5&C_2&C_4\\
C_2&C_4&C_1&C_3&C_5\\
C_3&C_5&C_2&C_4&C_1\\
C_4&C_1&C_3&C_5&C_2\\
C_5&C_2&C_4&C_1&C_3\\
\end{array}\right).
\end{align*}
\end{ex}

\begin{rem}
We consider the case $t=n$ of the above theorem.
We claim that one more weighing matrix can be added to the set of
$f$ mutually unbiased weighing matrices constructed in the above theorem.
Define a $(1,-1,0)$-matrix $W'$ to be $(w_j w_i^T)_{i,j=1}^n$, where
$w_i$ is the same as in Lemma~\ref{lem:ah}. 
Then $W'$ is a weighing matrix of weight $k$. Indeed, 
\begin{align*}
\text{the $(i,j)$-block of }W'W'^T&=\sum_{m=1}^n w_m w_i^T w_j w_m^T \\
&=\delta_{ij}k\sum_{m=1}^n w_m w_m^T \\ &=k^2 I_n, 
\end{align*}
where $\delta_{ij}$ denotes the Kronecker delta. 
Now we show that $W', \widetilde{W}_m$ are unbiased for any $m\in\{1,2,\ldots,f\}$, 
where $\widetilde{W_1},\widetilde{W_2},\ldots,\widetilde{W_f}$ are mutually unbiased weighing matrices constructed in the above theorem. 
Letting $l''(i,j)$ denote the $(i,j)$-entry of the Latin square $L_m$, there uniquely 
exists $a\in\{1,2,\ldots,n\}$ such that $l''(j,a)=i$.
Then
\begin{align*}
\text{the $(i,j)$-block of }W'\widetilde{W}_m^T
&=\sum_{m=1}^n w_m w_i^T w_{l''(j,m)} (w_{l''(j,m)})^T\\
&= k w_a w_{i}^T.
\end{align*}
Since $w_a w_{i}^T$ is a $(1,-1,0)$-matrix, $W', \widetilde{W}_{m}$ are unbiased. 
Thus, Proposition~\ref{prop:HKO} is obtained.  
This means that the above theorem is an extension of Proposition~\ref{prop:HKO}.
\end{rem}

An example of a weighing matrix of order $4$ and weight $3$ is 
\[
\left(
\begin{array}{rrrr}
  0 & 1 & 1 & 1 \\
  -1 & 0 & 1 & -1\\
  -1 & -1 & 0 & 1\\
  -1 & 1 & -1 & 0
\end{array}
\right).
\]
If there exists a set of $f$ mutually suitable Latin squares of side $t$, 
where $t\geq 4$, then 
there exists a set of $f$ mutually unbiased weighing matrices of order $4t$ and weight $9$
by Theorem~\ref{thm:msls}.
Since there exists a set of $5$ mutually orthogonal Latin squares of side $12$
(see Proposition~\ref{prop:latin} (iii)), equivalently $5$ mutually suitable Latin squares
by~\cite[Lemma~2.3]{HKO},
we obtain a set of $5$ mutually unbiased weighing matrices of order $48$ and weight $9$.

By Proposition~\ref{prop:latin} (ii) and~\cite[Lemma~2.3]{HKO},
there exists a set of $q-1$ mutually suitable Latin squares of side $q$ for any prime power $q$.
We then have the following corollary. 
\begin{cor}\label{cor::muwm}
Assume a weighing matrix of order $n$ and weight $n-1$ exists.
Let $q$ be a prime power with $q\geq n$. 
Then a set of $q-1$ mutually unbiased weighing matrices of order $nq$ and weight $(n-1)^2$ exists. 
\end{cor}

Since it is well known that there exists a weighing matrix of order $p+1$ and weight $p$ 
for an odd prime power $p$,
there exists a set of $q-1$ mutually unbiased weighing matrices of order $q(p+1)$ and 
weight $p^2$ for 
an odd prime power $p$ and a prime power $q$ with $q\geq p+1$.

\section{Upper bounds of the maximum size of unbiased weighing matrices}
\label{Sec:sdp}

The notation $W_{\max}(n,9)$ is used to denote
the maximum size  among sets of mutually
unbiased weighing matrices of order $n$ and weight $9$.
This section studies the upper bounds of $W_{\max}(n,9)$.
The results are then used in Section~\ref{Sec:small} 
for the determination of $W_{\max}(16,9)$ and 
giving bounds for $W_{\max}(n,9)$ for $n=17,18,\ldots,24$.

\subsection{Relationship with spherical three-distance sets}

In order to obtain upper bounds of $W_{\max}(n,9)$, we consider
some spherical three-distance sets.
For a positive integer $s$, a \emph{spherical $s$-distance set} is a collection of unit vectors 
in $\mathbb{R}^{n}$ such that
the set of Euclidean distances between any two distinct vectors has cardinality $s$. 
For a set $X\subset  \RR^{n}$,
define the following set:
\begin{equation}\label{eq:ax}
A(X)=\{\langle x,y\rangle \mid x,y\in X,x\neq y \}, 
\end{equation}
where $\langle x,y\rangle $ denotes
the standard inner product.
Let $S^{n-1}$ be the unit sphere in $\RR^n$. 
We first review the following lemma, which is a slightly different version 
of {\cite[Proposition~2.3]{NS}} 
and connects mutually unbiased weighing matrices and some spherical three-distance sets.

\begin{lem}[{\cite[Proposition~2.3]{NS}}]\label{lem:MUWM}
Let $f, n, k$ be positive integers such that $f\geq2$. 
The existences of the following sets are equivalent.
\begin{enumerate}
\item A set of $f$ mutually unbiased weighing matrices of order $n$ and weight $k$.  
\item A subset $X\subset  S^{n-1}$ with the property that 
$A(X)=\{\pm 1/\sqrt{k} ,0\}$
and there exists a partition $\{X_0,X_1,\ldots,X_f\}$ of $X$ such that each $X_i$ is an orthonormal basis.
\end{enumerate}
\end{lem}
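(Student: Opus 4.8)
The plan is to prove the equivalence of the two statements by constructing an explicit correspondence between mutually unbiased weighing matrices and subsets of the sphere, checking that the algebraic conditions on the matrices translate exactly into the distance and orthonormality conditions on the vectors. The natural object to examine is the rows of the weighing matrices: if $W_1, \dots, W_f$ are weighing matrices of order $n$ and weight $k$, I would normalize their rows by the factor $1/\sqrt{k}$, so that each normalized row becomes a unit vector in $\RR^n$. Collecting all such rows over all $f$ matrices produces a candidate set $X \subset S^{n-1}$, and setting $X_i$ to be the (normalized) rows of $W_i$ gives the candidate partition.

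First I would verify the implication $(i)\Rightarrow(ii)$. The condition $W_iW_i^T = kI_n$ says precisely that distinct rows of a single $W_i$ are orthogonal and each row has squared norm $k$; after normalization this means each $X_i$ is an orthonormal basis of $\RR^n$, and in particular $0 \in A(X)$ arises from distinct rows within the same matrix. For two distinct matrices $W_i, W_j$, the unbiasedness condition says $(1/\sqrt{k})W_iW_j^T$ is again a weighing matrix, hence a $(1,-1,0)$-matrix scaled appropriately; the entries of $W_iW_j^T$ are the pairwise inner products of rows of $W_i$ with rows of $W_j$, and these entries must therefore lie in $\{k, -k, 0\}$. After normalizing both sides by $1/\sqrt k$, the cross inner products land in $\{\pm 1/\sqrt k, 0\}$. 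Combined with the intra-matrix inner products (which are $0$ or $1$, the latter only for identical vectors), this shows $A(X) = \{\pm 1/\sqrt k, 0\}$ as required. I would also need to confirm that $X$ genuinely has this full inner product set and that no two vectors coincide, which uses $f \ge 2$ and the fact that a weighing matrix has no repeated rows.

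For the converse $(ii)\Rightarrow(i)$, I would reverse the construction: given $X$ with the stated partition, form each $W_i$ by taking the vectors of $X_i$ as rows and multiplying by $\sqrt k$ to clear the normalization. Since each $X_i$ is an orthonormal basis, the resulting $W_i$ satisfies $W_iW_i^T = kI_n$; the key remaining point is that the entries of each $W_i$ lie in $\{1,-1,0\}$, which is exactly what the condition $A(X) = \{\pm 1/\sqrt k, 0\}$ forces on the coordinates — here one expresses the coordinates of a vector in $X_i$ as inner products against the standard basis, but more carefully, the $(1,-1,0)$ structure must be extracted from the global distance condition, and this is the step requiring attention. Finally, unbiasedness of $W_i, W_j$ follows by running the inner-product computation in reverse, using that cross inner products lie in $\{\pm 1/\sqrt k, 0\}$.

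The main obstacle I anticipate is the coordinate-extraction step in $(ii)\Rightarrow(i)$: showing that the vectors in $X$ can be taken, after a fixed orthogonal change of basis, to have entries only in $\{\pm 1/\sqrt k, 0\}$ so that $\sqrt k\, X_i$ becomes a genuine $(1,-1,0)$-matrix. The distance set condition $A(X) = \{\pm 1/\sqrt k, 0\}$ controls inner products between members of $X$, not the raw coordinates, so one must choose a distinguished orthonormal basis — most naturally one of the $X_i$ themselves — and argue that every other vector has inner products with this basis lying in $\{\pm 1/\sqrt k, 0\}$, which then literally are its coordinates in that frame. This is precisely where the partition structure and the requirement $f \ge 2$ are used, and where the argument must be made rigorous rather than merely plausible; since the statement is quoted as a known result from~\cite{NS}, I would follow that source for the delicate normalization, adapting it to the present distance set $\{\pm 1/\sqrt k, 0\}$.
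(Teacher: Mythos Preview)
The paper does not actually prove this lemma; it is stated as a citation of \cite[Proposition~2.3]{NS} and used as a black box. So there is no ``paper's own proof'' to compare against beyond the reference.

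That said, your proposal contains a genuine counting error that you should fix. The partition in (ii) has $f+1$ blocks $X_0,X_1,\ldots,X_f$, not $f$, yet your forward construction produces only the $f$ sets of normalized rows of $W_1,\ldots,W_f$. The missing block $X_0$ is the standard orthonormal basis of $\RR^n$ (equivalently, the rows of $I_n$). This is not a cosmetic point: the inner product of a standard basis vector $e_j$ with a normalized row $(1/\sqrt{k})\,r$ of $W_i$ is exactly $(1/\sqrt{k})(W_i)_{r,j}\in\{0,\pm 1/\sqrt{k}\}$, so $X_0$ fits into the distance set, and the paper itself uses precisely this construction later when it forms $Y$ from the rows of $I_{16},\tfrac{1}{3}W_1,\ldots,\tfrac{1}{3}W_{15}$.

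Including $X_0$ also dissolves the ``main obstacle'' you flagged in the converse direction. You correctly observed that one must pick a distinguished $X_i$ as the coordinate frame; that distinguished block is exactly $X_0$. After an orthogonal change of basis sending $X_0$ to the standard basis, the coordinates of each vector in $X_i$ ($i\ge 1$) are its inner products with the elements of $X_0$, which by hypothesis lie in $\{0,\pm 1/\sqrt{k}\}$. Multiplying by $\sqrt{k}$ then yields genuine $(1,-1,0)$-matrices $W_1,\ldots,W_f$, and the remaining conditions follow as you described. So your outline is salvageable, but as written the $(i)\Rightarrow(ii)$ direction does not produce the required object, and the ``delicate'' step in $(ii)\Rightarrow(i)$ is in fact immediate once the role of $X_0$ is recognized.
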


According to Lemma~\ref{lem:MUWM}, upper bounds of $f$ are obtained from upper bounds of $|X|$.
Also, many examples of 
unbiased weighing matrices of order \( n \) and  weight $9$ are given in Section~\ref{Sec:small}
for \( n \le 24 \).
By Lemma~\ref{lem:MUWM},
this provides examples of  spherical three-distance sets $X \subset \RR^n$ 
with $A(X)=\{\pm 1/3 ,0\}$ satisfying the above condition.

\subsection{Upper  bounds based on linear programming}

Here, we consider a linear programming (LP) method to 
obtain upper bounds for spherical three-distance sets.
We now prepare to restate~\cite[Corollary 10]{BKR}.

The Gegenbauer polynomials $G_k^{n}(x)$ of degree $k$ with dimension parameter $n$
are defined using the following recurrence relation:
\begin{align*}
&G^{n}_{0}(x)=1, \\
&G^{n}_{1}(x)=x,\\
&G^{n}_{k}(x)={\frac{(2k+n-4) \, x \, G^{n}_{k-1}(x)-(k-1)G^{n}_{k-2}(x)}{k+n-3}} 
\text{ if }k\geq{2}.
\end{align*}
Delsarte's linear programming method is then formulated as follows:
\begin{lem}[{see also~\cite[Theorem~4.3]{DGS}}]\label{lem:LP}
Let $p_{\text{LP}}$ be the parameter of LP constraints. 
Let $X$ be a spherical $s$-distance set in $\RR^n$ with $A(X)=\{d_1,d_2,\ldots,d_s\}$. 
Then 
\[
|X|\leq \max
\left\{
1+\sum_{j=1}^s a_j
\, \middle|\,
\begin{array}{ll}
\sum_{i=1}^s a_i G^{n}_{k}(d_i)\geq -1\ ( k=1,2,\ldots, p_{\text{LP}}),\\
a_i\geq0 \ (i=1,2,\ldots,s)
\end{array}
\right\}.
\]
\end{lem}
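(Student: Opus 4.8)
The plan is to exhibit the empirical pair-distribution of $X$ as an explicit feasible point of the displayed linear program whose objective value equals $|X|$ exactly; since the maximum dominates every feasible value, the inequality $|X| \le \max\{\cdots\}$ follows at once. The only nontrivial ingredient is the positive semidefiniteness of the Gegenbauer polynomials on the sphere (Schoenberg's theorem, equivalently the addition formula for spherical harmonics; see also~\cite{DGS}): for every finite $X \subset S^{n-1}$ and every $k \ge 1$,
\[
\sum_{x \in X}\sum_{y \in X} G^{n}_{k}(\langle x, y\rangle) \ge 0.
\]

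First I would record the normalization $G^{n}_{k}(1)=1$ for all $k$, which follows by a one-line induction from the stated recurrence: assuming $G^{n}_{k-1}(1)=G^{n}_{k-2}(1)=1$, the recurrence gives $G^{n}_{k}(1)=\frac{(2k+n-4)-(k-1)}{k+n-3}=1$. Writing $N=|X|$ and letting $N_i$ denote the number of ordered pairs $(x,y)$ with $x\neq y$ and $\langle x,y\rangle=d_i$, the diagonal terms of the double sum contribute $N\cdot G^{n}_{k}(1)=N$, so separating diagonal from off-diagonal turns the positive-semidefiniteness inequality into
\[
\sum_{i=1}^{s} N_i\, G^{n}_{k}(d_i) + N \ge 0 \qquad (k \ge 1).
\]

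Next I would set $a_i := N_i/N$ for $i=1,\dots,s$ and verify that $(a_1,\dots,a_s)$ is feasible for the LP. Nonnegativity $a_i \ge 0$ is immediate, and dividing the previous display by $N>0$ yields $\sum_{i=1}^{s} a_i\, G^{n}_{k}(d_i) \ge -1$ for all $k\ge 1$, hence in particular for $k=1,\dots,p_{\text{LP}}$. The objective value at this point is computed from $\sum_{i=1}^{s} N_i = N(N-1)$, the total number of ordered pairs of distinct points of $X$, giving $1+\sum_{i=1}^{s} a_i = 1+(N-1)=N=|X|$. Since this feasible point attains objective $|X|$, the maximum over the feasible region is at least $|X|$, which is precisely the asserted bound.

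The only real obstacle is invoking the sphere positive-definiteness of $G^{n}_{k}$ correctly and confirming the normalization $G^{n}_{k}(1)=1$ under the recurrence as written; once these are in hand the argument is pure bookkeeping. The crucial conceptual point, and the one that distinguishes the max-form from the dual min-form, is that the \emph{true} pair-frequencies $N_i/N$ themselves satisfy every Gegenbauer constraint, so they serve to certify $|X|$ as a lower bound for the maximum rather than as an upper bound coming from some single prescribed feasible choice.
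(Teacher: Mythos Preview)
Your argument is correct and is precisely the standard proof of the Delsarte linear programming bound: exhibit the normalized inner-product distribution $(a_i)=(N_i/N)$ as a feasible point, using the positive-definiteness of the Gegenbauer polynomials on $S^{n-1}$ (the addition formula, equivalently Schoenberg's theorem) to verify the linear constraints, and observe that the objective value at this point is exactly $|X|$. Your check that $G_k^n(1)=1$ under the given recurrence is also correct.

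There is nothing to compare your proof against here: the paper does not supply its own proof of this lemma but simply quotes it from \cite[Theorem~4.3]{DGS}. Your write-up is essentially the argument one finds there.
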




Applying Lemma~\ref{lem:LP} to three-distance sets obtained from mutually unbiased weighing matrices by Lemma~\ref{lem:MUWM}, 
the following upper bounds of 
the maximum size  among sets of mutually unbiased weighing matrices 
are obtained.

\begin{prop}[{\cite[Corollary 10]{BKR}}]\label{prop:f}
Assume a set of $f$ mutually unbiased weighing matrices of order $n$ and weight $k$ exists.
Then
\[
f \le \frac{(n-1)(n+4)}{6}.
\]
If $3k -(n+2) >0$, then
\[
f\le \frac{k(n-1)}{3k-(n+2)}.
\]
\end{prop}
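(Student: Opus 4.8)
The plan is to translate the statement into the language of spherical codes via Lemma~\ref{lem:MUWM} and then to read off both bounds from the spectrum of a single positive semidefinite matrix attached to the degree-$3$ Gegenbauer polynomial. First I would invoke Lemma~\ref{lem:MUWM}: a set of $f$ mutually unbiased weighing matrices of order $n$ and weight $k$ produces a set $X \subset S^{n-1}$ with $A(X) = \{0, \pm 1/\sqrt{k}\}$ that partitions into $f+1$ orthonormal bases $X_0,\dots,X_f$ of $\RR^n$. Writing $M = |X| = (f+1)n$, the resolution of the identity $\sum_{x \in X_i} x x^T = I_n$ for each block gives $\sum_{x \in X} x x^T = (f+1) I_n$; that is, $X$ is a tight frame. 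Consequently the Gram matrix $G = (\langle x, y\rangle)_{x,y \in X}$ has exactly the two eigenvalues $f+1$ (with multiplicity $n$) and $0$ (with multiplicity $M-n$). This eigenstructure is the only structural input the argument needs.

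Next I would introduce the matrix $\Gamma = \bigl(G_3^{n}(\langle x,y\rangle)\bigr)_{x,y \in X}$. By the addition theorem for spherical harmonics, $\Gamma$ is positive semidefinite and $\operatorname{rank}\Gamma \le h_3 := \frac{n(n-1)(n+4)}{6}$, the dimension of the space of degree-$3$ spherical harmonics on $S^{n-1}$. The crux is to rewrite $\Gamma$ as an explicit affine function of $G$. Since every off-diagonal inner product lies in $\{0,\pm 1/\sqrt{k}\}$, the odd polynomial $F(t) = t\,(t^2 - 1/k)$ vanishes on $A(X)$, so $\bigl(F(\langle x,y\rangle)\bigr)_{x,y} = \tfrac{k-1}{k}\, I_M$. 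Expanding $F$ in the Gegenbauer basis gives $F = a\,G_1^{n} + b\,G_3^{n}$ with $b = \frac{n-1}{n+2} > 0$ and $a = \frac{3k-(n+2)}{k(n+2)}$; because $\bigl(G_1^{n}(\langle x,y\rangle)\bigr) = G$, this yields
\begin{align*}
\Gamma = \frac{1}{b}\left(\frac{k-1}{k}\, I_M - a\,G\right).
\end{align*}
Hence $\Gamma$ is diagonalized together with $G$, with eigenvalue $\frac{k-1}{bk}$ (multiplicity $M-n$, on $\ker G$) and eigenvalue $\frac{1}{b}\bigl(\frac{k-1}{k} - a(f+1)\bigr)$ (multiplicity $n$).

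From here both bounds drop out. The first eigenvalue is strictly positive, so $\operatorname{rank}\Gamma \ge M-n$; combined with $\operatorname{rank}\Gamma \le h_3$ and $n + h_3 = \binom{n+2}{3}$ this gives $M \le \binom{n+2}{3}$, and substituting $M = (f+1)n$ yields $f \le \frac{(n-1)(n+4)}{6}$ (this is also exactly the classical absolute bound for a symmetric three-distance set). For the second bound I would use positive semidefiniteness directly: the second eigenvalue must be nonnegative, i.e.\ $\frac{k-1}{k} \ge a\,(f+1)$. When $3k-(n+2) > 0$ the coefficient $a$ is positive, so this rearranges to $f+1 \le \frac{k-1}{ak} = \frac{(k-1)(n+2)}{3k-(n+2)}$, which simplifies to $f \le \frac{k(n-1)}{3k-(n+2)}$. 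The main obstacle I anticipate is the bookkeeping in the middle step: verifying the Gegenbauer expansion $F = a\,G_1^{n}+b\,G_3^{n}$ with the stated constants and confirming the tight-frame spectrum of $G$. Once the identity for $\Gamma$ is in hand the two conclusions are immediate, and the role of the hypothesis $3k-(n+2)>0$ is precisely to make $a$ positive so that the semidefiniteness constraint bounds $f+1$ from above rather than below.
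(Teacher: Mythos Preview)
Your argument is correct. The Gegenbauer expansion $F=aG_1^{n}+bG_3^{n}$ checks out with the stated $a,b$ (using $G_3^{n}(t)=\frac{(n+2)t^3-3t}{n-1}$), the tight-frame spectrum of $G$ is exactly as you describe, and both inequalities follow from the rank bound and the positivity of the second eigenvalue of $\Gamma$, respectively.

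The route you take is somewhat different from what the paper indicates. The paper does not spell out a proof: it quotes the result from \cite{BKR} and says that it is obtained by applying Delsarte's linear programming bound (Lemma~\ref{lem:LP}) to the three-distance set coming from Lemma~\ref{lem:MUWM}. That is the standard LP approach, in which one bounds $|X|$ by optimising over the auxiliary variables $a_i$. You instead exploit the extra structure that $X$ is a union of $f+1$ orthonormal bases (a tight frame), which fixes the spectrum of the Gram matrix $G$; combining this with the single annihilator polynomial $F(t)=t(t^2-1/k)$ yields an explicit diagonalisation of the degree-$3$ Gegenbauer matrix $\Gamma$, so both the absolute bound (via $\operatorname{rank}\Gamma\le h_3$) and the LP-type bound (via $\Gamma\succeq0$) fall out of one identity without any optimisation. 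This is a cleaner and more transparent derivation than the generic LP formulation, and it makes visible exactly where the hypothesis $3k-(n+2)>0$ enters (the sign of $a$). A minor remark: Lemma~\ref{lem:MUWM} is stated for $f\ge2$, but your argument only needs the off-diagonal inner products to lie in $\{0,\pm1/\sqrt{k}\}$, so it goes through for $f\ge1$ as well, and the case $f\le1$ is trivial anyway.
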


In Table~\ref{Tab:UB},
we list the upper bounds $Ub_{\text{LP}}(n)$ obtained by Proposition~\ref{prop:f}
for $n=10,11,\ldots,30$.

For orders $7, 8$,
the first two examples of sets of mutually unbiased weighing matrices of
weight $4$ achieve the second upper bound in the above proposition, 
as given in~\cite[Theorems~21, 22]{BKR}, respectively.
As a third example, in Section~\ref{Sec:small}
we present sets of $15$ mutually unbiased weighing matrices of
order $16$ and weight $9$, which also achieve the bound.

\subsection{Upper  bounds based on semidefinite programming}

Here, we consider a semidefinite programming (SDP for short) method to 
obtain other upper bounds for spherical three-distance sets. 
Semidefinite programming often outperforms linear programming.
Liu and Yu~\cite{Liu24} utilized an SDP method to derive upper bounds 
for spherical three-distance sets.
We now prepare to restate~\cite[Theorem~5]{Liu24}. 

Following~\cite{bachoc08}, we define the $(p_{\text{SDP}}-k+1)\times(p_{\text{SDP}}-k+1)$
matrices $Y_k^n(u,v,t)$ and $S_k^n(u,v,t)$, where $p_{\text{SDP}}$ is the parameter of SDP matrix constraints.
For $k=0,1,\ldots,p_{\text{SDP}}$,
define the following:
\begin{align*}
(Y_k^n(u,v,t))_{i,j=0}^{p_{\text{SDP}} - k} &= u^iv^j((1-u^2)(1-v^2))^{\frac{k}{2}}G_k^{n-1}\left(\frac{t-uv}{\sqrt{(1-u^2)(1-v^2)}}\right).
\end{align*}
Define $S_0^n(u,v,t)$ to be the  
$(p_{\text{SDP}}+1) \times (p_{\text{SDP}}+1)$ all-one matrix.
For $k=1,2,\ldots,p_{\text{SDP}}$,
define the following:
\begin{align*}
S_k^n(u,v,t) = 
\begin{cases}
\frac{1}{6} \sum\limits_{\sigma} \sigma Y_k^n(u,v,t) &\text{ if } (u,v,t) \ne (1,1,1),\\
O  &\text{ if } (u,v,t) = (1,1,1),
\end{cases}
\end{align*}
where $\sigma$ runs through the group of all permutations of the variables $u,v,t$ 
which acts on matrix coefficients in an obvious way.

\begin{prop}[{\cite[Theorem~5]{Liu24}}]
\label{prop:SDP-primal}
Let $p_{\text{LP}}, p_{\text{SDP}}$ be the parameters of LP constraints, SDP matrix constraints, respectively.
If $X \subset \RR^n$ is a spherical three-distance set with 
$A(X)=\{d_1,d_2,d_3\}$,
then $|X|$ is bounded above by the solution of the following 
SDP problem:
\begin{subequations}
\begin{align*}
& \underset{}{\text{maximize}}
& & 1+\frac{1}{3}(x_1+x_2+x_3)\\
& \text{subject to}
& & \begin{pmatrix}
1&0\\
0&0
\end{pmatrix}
+{\frac{1}{3}}
\begin{pmatrix}
0&1\\
1&1
\end{pmatrix}
(x_1+x_2+x_3) +
\begin{pmatrix}
0&0\\
0&1
\end{pmatrix}
\sum_{i=4}^{13} x_i 
\succeq 0, \\
&&& 3+x_1 G_{k}^{n}(d_1)+x_2 G_{k}^{n}(d_2)+x_3 G_{k}^{n}(d_3)\geq 0\ (k=1,2,\ldots,p_{\text{LP}}), \\
&&& 
\begin{aligned}
&S_{k}^{n}(1,1,1)+x_1 S_{k}^{n}(d_1,d_1,1)
+x_2 S_{k}^{n}(d_2,d_2,1)+x_3 S_{k}^{n}(d_3,d_3,1)\\
&+x_4 S_{k}^{n}(d_1,d_1,d_1)
+x_5 S_{k}^{n}(d_2,d_2,d_2)
+x_6 S_{k}^{n}(d_3,d_3,d_3)\\
&+x_7 S_{k}^{n}(d_1,d_1,d_2)
+x_8 S_{k}^{n}(d_1,d_1,d_3)
+x_9 S_{k}^{n}(d_2,d_2,d_1)\\
&+x_{10} S_{k}^{n}(d_2,d_2,d_3)
+x_{11} S_{k}^{n}(d_3,d_3,d_1)
+x_{12} S_{k}^{n}(d_3,d_3,d_2)\\
&+x_{13} S_{k}^{n}(d_1,d_2,d_3)  \succeq  0\ (k=0,1,\ldots,p_{\text{SDP}}), 
\end{aligned} \\
&&& x_j  \geq 0 \ (j=1,2,\ldots,13), 
\end{align*}
\end{subequations}
where the sign ``$\succeq 0$'' stands for ``is positive semidefinite''.
\end{prop}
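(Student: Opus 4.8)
The plan is to exhibit, for an arbitrary spherical three-distance set $X \subset S^{n-1}$ with $A(X) = \{d_1,d_2,d_3\}$ and $|X| = N$, an explicit feasible point of the displayed SDP whose objective value equals $N$. Since the program maximizes, its optimal value is then at least $N$, which is exactly the asserted bound $|X| \le (\text{SDP optimum})$. To build the feasible point I would count configurations. For $i = 1,2,3$ let $n_i$ be the number of ordered pairs $(x,y) \in X^2$ with $x \ne y$ and $\langle x,y\rangle = d_i$, and set $x_i = 3 n_i / N$. For each of the ten inner-product patterns of a triple of pairwise distinct points with entries in $\{d_1,d_2,d_3\}$ — the three ``all equal'' patterns $d_1d_1d_1,d_2d_2d_2,d_3d_3d_3$, the six ``two equal, one different'' patterns, and $d_1d_2d_3$ — let $m_j$ be the number of ordered triples of distinct points realizing it, and set $x_j = m_j / N$ for $j = 4,\dots,13$, matched to the arguments of $S_k^n$ in the proposition. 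All $x_j \ge 0$ trivially, and since $\sum_{i=1}^3 n_i = N(N-1)$ we get $\tfrac13\sum_{i=1}^3 x_i = N-1$, so the objective equals $1 + (N-1) = N$, as required.

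Next I would verify the constraints. The LP inequalities come from Schoenberg positivity of the Gegenbauer polynomials (the same fact underlying Lemma~\ref{lem:LP}): for each $k \ge 1$ one has $\sum_{x,y\in X} G_k^n(\langle x,y\rangle) \ge 0$. Splitting this sum into the $N$ diagonal terms (each contributing $G_k^n(1) = 1$) and the off-diagonal terms $\sum_{i} n_i G_k^n(d_i)$, then dividing by $N/3$, yields exactly $3 + \sum_i x_i G_k^n(d_i) \ge 0$. The matrix constraints for $k \ge 1$ come from the three-point positive-semidefiniteness established in~\cite{bachoc08}: for every finite $X \subset S^{n-1}$, the matrix $\sum_{x,y,z\in X} S_k^n(\langle x,y\rangle,\langle x,z\rangle,\langle y,z\rangle)$ is positive semidefinite. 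I would partition the ordered triples into three classes — all three points equal (contributing $N\,S_k^n(1,1,1)$), exactly two equal (contributing $3\sum_i n_i\,S_k^n(d_i,d_i,1)$, after using that $S_k^n$ is invariant under permuting its three arguments, so the three ``two-equal'' positions give the same matrix), and pairwise distinct (contributing $\sum_j m_j\,S_k^n(\cdot)$ over the ten patterns) — and divide the PSD sum by $N$. The factor $3$ carried by each pair, which records the three ways a pair sits inside a degenerate ``two equal'' triple, is precisely why dividing the two-point sum by $N/3$ and the three-point sum by $N$ produces a single consistent set of coefficients $x_i = 3n_i/N$; the resulting inequality is then the printed matrix constraint, with $S_k^n(1,1,1)$ carrying coefficient $1$. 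The $k=0$ block reduces to a nonnegative multiple of the all-one matrix and is automatic.

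Finally, the $2\times 2$ constraint I would read as the moment inequality $\bigl(\tfrac13\sum_{i=1}^3 x_i\bigr)^2 \le \tfrac13\sum_{i=1}^3 x_i + \sum_{j=4}^{13} x_j$ together with nonnegativity of its diagonal. Substituting the counts gives $\tfrac13\sum_i x_i = N-1$ and $\sum_{j=4}^{13} x_j = N(N-1)(N-2)/N = (N-1)(N-2)$, so the matrix becomes the rank-one outer product $\begin{pmatrix}1 & N-1\\ N-1 & (N-1)^2\end{pmatrix}$, which is positive semidefinite (its determinant is $0$); hence the constraint holds, with equality. Collecting these verifications shows the constructed point is feasible with objective $N$, so $|X| = N$ is at most the SDP optimum.

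The main obstacle is the three-point positivity $\sum_{x,y,z} S_k^n(\langle x,y\rangle,\langle x,z\rangle,\langle y,z\rangle)\succeq 0$: this is not elementary and is the heart of the Bachoc--Vallentin framework, resting on the Schoenberg-type characterization of point-stabilizer-invariant positive-definite kernels on $S^{n-1}$ in terms of the matrices $Y_k^n$ and their symmetrizations $S_k^n$. Since the paper adopts the definitions of $Y_k^n$ and $S_k^n$ from~\cite{bachoc08}, I would invoke that positivity as a black box; the remaining work is pure bookkeeping — matching the permutation-symmetrization of $S_k^n$ to the combinatorics of which ordered triples realize which inner-product pattern, and tracking the factor $3$ relating the pair-normalization to the triple-normalization so that the printed coefficients come out correctly.
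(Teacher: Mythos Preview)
The paper does not prove this proposition at all: it is quoted verbatim as \cite[Theorem~5]{Liu24}, and the only additional content is the remark immediately after it, which explains that the variables $x_i$ should be thought of as (normalized) counts of ordered triples in $X$ realizing the various inner-product patterns. Your proposal is correct and is exactly the standard argument underlying such SDP bounds --- and it matches that remark precisely: you build the feasible point by setting $x_i = 3n_i/N$ for $i=1,2,3$ and $x_j = m_j/N$ for $j=4,\dots,13$, check that the objective evaluates to $N$, derive the scalar constraints from Schoenberg positivity of the Gegenbauer polynomials, derive the matrix constraints from the Bachoc--Vallentin three-point positive-semidefiniteness \cite{bachoc08}, and verify the $2\times 2$ moment constraint by the rank-one computation. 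Since the paper offers no proof to compare against, there is nothing further to contrast; your bookkeeping (in particular the factor $3$ linking the pair normalization to the triple normalization) is handled correctly.
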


\begin{rem}
In the above proposition, 
the variables $x_i$ represent the number of triple points in $X$ associated 
with certain combinations of $d_1, d_2, d_3,1$. For instance, $x_1$ is related to counting triple points in $X$, where the three inner product values are $(d_1, d_1, 1)$. In a similar setting, $x_2$ corresponds to $(d_2, d_2, 1)$, and so on up to $x_{13}$, which is associated with $(d_1, d_2, d_3)$.
\end{rem}


By Lemma~\ref{lem:MUWM},
if there exists a  set of $f$ mutually unbiased weighing matrices of order $n$ and weight $9$,
then there exists a spherical three-distance set $X \subset S^{n-1}$ with $A(X)=\{\pm 1/3,0\}$.
Applying Proposition~\ref{prop:SDP-primal}
for $A(X)=\{\pm 1/3,0\}$ and $n=10,11,\ldots,30$,
we have the upper bounds $Ub_{\text{SDP}}(n)$ 
of the maximum size  among sets of mutually unbiased weighing matrices of order $n$ and weight $9$
listed in Table~\ref{Tab:UB}.
The SDP problem was solved in MATLAB with \textsc{CVX toolbox}~\cite{DB} under the condition $p_{\text{LP}}= p_{\text{SDP}}=5$. 
Table~\ref{Tab:UB} shows that  an SDP method indeed provides a better upper bound than
an LP method and is used in our computation of Section~\ref{Sec:small}. 

\begin{table}[thb]
\caption{Upper bounds of $W_{\max}(n,9)$}
\label{Tab:UB}
\centering
\medskip
{\small
\begin{tabular}{c|c|c||c|c|c}
\noalign{\hrule height1pt}
$n$ &  $Ub_{\text{LP}}(n)$ & $Ub_{\text{SDP}}(n)$&
$n$ &  $Ub_{\text{LP}}(n)$ & $Ub_{\text{SDP}}(n)$\\
\hline
$10$ &   $5$ &  $5$ & $21$ &  $45$ & $45$ \\
$11$ &   $6$ &  $6$ & $22$ &  $63$ & $63$ \\
$12$ &   $7$ &  $7$ & $23$ &  $99$ & $99$ \\
$13$ &   $9$ &  $9$ & $24$ & $107$ & $96$ \\
$14$ &  $10$ & $10$ & $25$ & $116$ & $92$ \\
$15$ &  $12$ & $12$ & $26$ & $125$ & $90$ \\
$16$ &  $15$ & $15$ & $27$ & $134$ & $87$ \\
$17$ &  $18$ & $18$ & $28$ & $144$ & $85$ \\
$18$ &  $21$ & $21$ & $29$ & $154$ & $83$ \\
$19$ &  $27$ & $27$ & $30$ & $164$ & $81$ \\
$20$ &  $34$ & $34$ &&&\\
\noalign{\hrule height1pt}
\end{tabular}
}
\end{table}

\begin{rem}
We have that  $Ub_{\text{LP}}(n)=Ub_{\text{SDP}}(n)$ for $n=10,11,\ldots,23$.
This means that the integer parts of upper bounds obtained by
linear programming and semidefinite programming match.
However, the decimal parts do not meet.
For example, for $n=11$, we have that $6.42857$ and $6.4272$, respectively.
\end{rem}

\section{Unbiased weighing matrices and ternary codes}
\label{Sec:code}

In this section, for a given weighing matrix $W_1$
we give a method to
obtain essentially all weighing matrices $W_2$ such that $W_1,W_2$ are unbiased
by considering the ternary code $C_3(W_1)^\perp$.

\begin{prop}\label{prop:code}
Let $W_1, W_2$ be unbiased weighing matrices of order $n$ and weight $k$.
Assume $k \equiv 0 \pmod 3$.
Then, every row of $W_2$ is presented as a $C_3(W_1)^\perp$ codeword.
\end{prop}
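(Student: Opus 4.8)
The plan is to reduce everything modulo $3$ and exploit the fact that the entries of $W_1W_2^T$ are all divisible by $3$. First I would record the two structural facts I need. Since $W_1,W_2$ are unbiased weighing matrices of weight $k$, the matrix $M=(1/\sqrt{k})\,W_1W_2^T$ is itself a weighing matrix, so its entries lie in $\{-1,0,1\}$; consequently every entry of $W_1W_2^T$ lies in $\{-\sqrt{k},0,\sqrt{k}\}$. Moreover, because a pair of unbiased weighing matrices of weight $k$ exists, $k$ is a perfect square, so I may write $k=\ell^2$ with $\ell=\sqrt{k}\in\ZZ_{>0}$.

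The main step is a short number-theoretic observation. From $k\equiv 0\pmod 3$ and $k=\ell^2$ we get $3\mid\ell^2$, and since $3$ is prime this forces $3\mid\ell$. Hence $\ell\equiv 0\pmod 3$, so every entry of $W_1W_2^T$, being $0$ or $\pm\ell$, is divisible by $3$; that is, $W_1W_2^T\equiv O\pmod 3$.

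Finally I would translate this congruence into the code-theoretic conclusion. Writing $r_i$ for the $i$-th row of $W_1$ and $s_j$ for the $j$-th row of $W_2$ as integer vectors, the $(i,j)$-entry of $W_1W_2^T$ is the standard inner product $\langle r_i,s_j\rangle$. Reducing modulo $3$ under the convention that $1,-1,0\in\ZZ$ correspond to $1,2,0\in\FF_3$, this inner product reduces to the $\FF_3$-inner product of the reductions $\overline{r_i}$ and $\overline{s_j}$, which vanishes by the previous step. Thus each $\overline{s_j}$ is orthogonal over $\FF_3$ to every generator $\overline{r_i}$ of $C_3(W_1)$, hence to all of $C_3(W_1)$, so that $\overline{s_j}\in C_3(W_1)^\perp$; in other words, every row of $W_2$ is a codeword of $C_3(W_1)^\perp$.

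I expect the only genuine content to be the divisibility step $3\mid k\Rightarrow 3\mid\sqrt{k}$, which is exactly where the hypothesis $k\equiv 0\pmod 3$ is used to kill the potentially nonzero values $\pm\sqrt{k}$ modulo $3$; everything else is bookkeeping. The one point to state carefully is that reduction modulo $3$ preserves inner products given the sign convention $-1\mapsto 2$, so that integer orthogonality modulo $3$ really does become $\FF_3$-orthogonality.
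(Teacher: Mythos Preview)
Your proof is correct and follows essentially the same approach as the paper: both observe that the entries of $W_1W_2^T$ lie in $\{0,\pm\sqrt{k}\}$, use $k\equiv 0\pmod 3$ together with $k$ being a perfect square to deduce $3\mid\sqrt{k}$ (the paper phrases this as $k=(3m)^2$), conclude $W_1W_2^T\equiv O\pmod 3$, and then read off the $\FF_3$-orthogonality. Your write-up is slightly more explicit about why integer orthogonality modulo~$3$ coincides with $\FF_3$-orthogonality under the $-1\mapsto 2$ convention, but the argument is the same.
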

\begin{proof}
Since $W_1,W_2$ are unbiased weighing matrices,
all entries of $W_1W_2^T$ are $0, \pm \sqrt{k}$.
It is trivial that $k$ is a square and $k \equiv 0 \pmod 3$ if and only if
$k=(3m)^2$ for some integer $m$.
Since $k=(3m)^2$ for some integer $m$,
we have that $W_1W_2^T \equiv O \pmod{3}$.
If we regard rows of $W_1,W_2$ as vectors of $\FF_3^n$,
then every row of $W_2$ is orthogonal to all rows of $W_1$.
This implies
that every row of $W_2$ is presented as a codeword of $C_3(W_1)^\perp$.
The result follows.
\end{proof}

\begin{rem}
\begin{enumerate}
\item It follows from the above proof that $C_3(W_2) \subset C_3(W_1)^\perp$.
\item Since $W_1W_1^T \equiv W_2W_2^T \equiv O \pmod 3$, 
$C_3(W_1), C_3(W_2)$ are self-orthogonal.
\item For the case $n=k$ (unbiased Hadamard matrices), 
a similar observation is given in~\cite{HK}.
\end{enumerate}
\end{rem}

Assume $k \equiv 0 \pmod 3$.
For a given weighing matrix $W_1$ of order $n$ and order $k$,
Proposition~\ref{prop:code} claims that
all weighing matrices $W_2$ such that $W_1,W_2$ are unbiased, 
are obtained as codewords of weight $k$ in $C_3(W_1)^\perp$ 
by changing rows and/or negating rows.


We describe how to find all weighing matrices  $W_2$ of order $n$ and weight $k$ 
such that $W_1,W_2$ are unbiased,
formed by codewords of weight $k$ in $C_3(W_1)^\perp$.
A classification method of weighing matrices based on a classification
of maximal self-orthogonal codes is given in~\cite{HM}.
Our method modifies that given in~\cite{HM}.
We define the following set:
\begin{multline*}
S_{k}(C_3(W_1)^\perp) 
=\{x=(x_1,x_2,\ldots,x_n) \in C_3(W_1)^\perp \mid \wt(x)=k, x_{i(x)}=1 \},
\end{multline*}
where
\[
i(x)=\min\{i \in \{1,2,\ldots,n\} \mid x_i \ne 0\}.
\]
By rescaling the rows, any weighing matrix equals a weighing matrix 
such that the first nonzero entry of each row is $1$.
This implies that considering only codewords of the form $x=(x_1,x_2,\dots,x_{n})$ 
with $x_{i(x)}=1$ suffices.
For a vector $x=(x_1,x_2,\ldots,x_{n})$ of $\FF_3^{n}$, we define the following:
\[
\overline{x}=(\overline{x_1},\overline{x_2},\ldots,\overline{x_{n}}) \in \ZZ^{n},
\]
where 
$\overline{0},\overline{1},\overline{2}$ are $0, 1,-1 \in \ZZ$, respectively.
Set $\overline{S_{k}(C_3(W_1)^\perp)}=\{
\overline{x} \mid x \in S_{k}(C_3(W_1)^\perp)\}$. 
A graph $\Gamma_{k}(C_3(W_1)^\perp)$ is constructed as follows:
\begin{enumerate}
\item The vertex set is 
\[
\{\overline{x}  \in \overline{S_{k}(C_3(W_1)^\perp)} \mid  
\langle \overline{x},r_i \rangle \in 
\{0,\pm \sqrt{k}\} \ (i=1,2,\ldots,n)\} \subset \ZZ^{n},
\]
where $r_i$ denotes the $i$-th row of $W_1$.
\item Two vertices $\overline{x}$ and $\overline{y}$ are adjacent
if $\langle \overline{x},\overline{y}\rangle =0$.
\end{enumerate}
Clearly, an $n$-clique in $\Gamma_{k}(C_3(W_1)^\perp)$ gives a weighing matrix 
$W_2$ of order $n$
and weight $k$ such that $W_1,W_2$ are unbiased.
All weighing matrices $W_2$
formed by codewords of $C_3(W_1)^\perp$ such that $W_1,W_2$ are unbiased,
are obtained
by finding all $n$-cliques in $\Gamma_{k}(C_3(W_1)^\perp)$.
By Proposition~\ref{prop:code}, 
one can obtain all weighing matrices $W_2$ such that $W_1,W_2$ are unbiased
by changing rows and/or negating rows.
In addition, by the following method,
we determine the existence of a set of $f$ mutually unbiased weighing matrices 
including a weighing matrix $W_1$ for $f \ge 3$.
Another graph $G(W_1)$ is constructed as follows:
\begin{enumerate}
\item The vertex set is 
$\{W_2 \mid W_1,W_2 \text{ are unbiased}\}$.
\item 
Two vertices $W_2,W_2'$ are adjacent if $W_2,W_2'$ are unbiased.
\end{enumerate}
Clearly, an $(f-1)$-clique in $G(W_1)$ gives the existence of 
a set of $f$ mutually unbiased weighing matrices including $W_1$.

By the above method, our computer calculation investigates the existence of 
 unbiased weighing matrices of order $n$ and weight $9$ for $n \le 24$
in Section~\ref{Sec:small}.
All computer calculations in the section
were performed using programs in \textsc{Magma}~\cite{Magma} and the language \textsc{C}
except for the computer calculations to find cliques
and to test the isomorphism of association schemes.
The computer calculations for finding cliques
were performed using {\sc Cliquer}~\cite{Cliquer}.
The computer calculations for testing the isomorphism of association schemes
were performed using {\sc AssociationSchemes}~\cite{GAPAS}.

\begin{rem}
The result in this section may be generalized to codes over the
finite field $\FF_p$ of order $p$, where $p$ is a prime with $p \ge 5$ as follows:
Let $W_1, W_2$ be unbiased weighing matrices of order $n$ and weight $k$.
Assume $k \equiv 0 \pmod p$.
Then, every row of $W_2$ is presented as a codeword of the dual code of
the code over $\FF_p$ generated by the rows of $W_1$.
In the study of unbiased weighing matrices of weight $25$,
codes over $\FF_5$ may be useful.
\end{rem}

\section{Unbiased weighing matrices of small orders}
\label{Sec:small}

In this section,
our computer calculations investigate the existence of 
unbiased weighing matrices of order $n$ and weight $9$ for $n \le 24$
by the method in the previous section.
We also determine $W_{\max}(n,9)$ $(n \le 16)$ and 
provide bounds for $W_{\max}(n,9)$ $(n=17,18,\ldots,24)$.

\subsection{Known classifications}

A classification of weighing matrices of order $n$ and weight $9$ is known
for $n \le 18$.
The numbers $N(n,9)$ of the inequivalent weighing matrices of order $n$ and weight $9$
are listed in Table~\ref{Tab:W} along with references.
The inequivalent weighing matrices can be obtained electronically from~\cite{Data}.
According to the order given in~\cite{Data}, we denote the 
$N(n,9)$ weighing matrices of order $n$ and weight $9$
by $W_{n,1}, W_{n,2}, \ldots,W_{n,N(n,9)}$ for $n \le 18$.

\begin{table}[thb]
\caption{Known classification of weighing matrices of order $n$ and weight $9$}
\label{Tab:W}
\centering
\medskip
{\small
\begin{tabular}{c|c|c}
\noalign{\hrule height1pt}
$n$ & $N(n,9)$ & References\\
\hline
$1,2,\ldots,9,11$ & $0$ & \cite{CRS86}\\
$10$ & $9$ & \cite{CRS86}\\
$12$ & $4$  & \cite{O89} \\
$13$  &$8$ & \cite{O93} \\
$14$ & $7$  & \cite{HM} \\
$15$ & $37$& \cite{HM} \\
$16$ & $704$ &\cite{HM} \\
$17$ & $2360$ & \cite{HM} \\
$18$  &$11891$ & \cite{HM} \\
\noalign{\hrule height1pt}
\end{tabular}
}
\end{table}

\subsection{Orders 10, 12, 14}

By the method in Section~\ref{Sec:code}, we verified that
there exists no weighing matrix $W'$ such that $W,W'$ are unbiased
in $\Gamma_9(C_3(W)^\perp)$ for 
\[
W=W_{10,1}, W_{12,1}, W_{12,2}, W_{12,3},W_{12,4},
W_{14,1}, W_{14,2}, \ldots, W_{14,7}.
\]
Hence, we have the following proposition.

\begin{prop}\label{prop:101214}
No weighing matrix of order $n$ and weight $9$ has an unbiased mate for $n=10, 12, 14$.
\end{prop}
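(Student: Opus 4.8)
The plan is to establish Proposition~\ref{prop:101214} as a finite, fully computational verification, relying entirely on the machinery developed in Section~\ref{Sec:code}. The key structural input is Proposition~\ref{prop:code}: since $k=9 \equiv 0 \pmod 3$, any weighing matrix $W'$ that is unbiased with a given $W$ must have all of its rows lying in the dual code $C_3(W)^\perp$. This reduces the search for unbiased mates, which a priori ranges over an uncountable family of $(1,-1,0)$-matrices, to a finite search inside the fixed ternary code $C_3(W)^\perp$. The orders $n=10,12,14$ are exactly the cases where the classification of weighing matrices of weight $9$ is already known (Table~\ref{Tab:W}) and the number of inequivalent matrices is small: there are $9$ for $n=10$, $4$ for $n=12$, and $7$ for $n=14$. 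Because an unbiased mate can be post-processed by row/column monomial operations, it suffices by the remarks in Section~\ref{Sec:code} to run the test on one representative from each equivalence class, namely $W_{10,1}$, the four $W_{12,j}$, and the seven $W_{14,j}$.

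First I would, for each representative $W$, compute the ternary code $C_3(W)$ generated by its rows and then form the dual code $C_3(W)^\perp$. By Remark~(ii) following Proposition~\ref{prop:code}, $C_3(W)$ is self-orthogonal, so this step is well-defined and the dual has the expected dimension. Next I would enumerate the set $S_9(C_3(W)^\perp)$ of all codewords of weight exactly $9$ whose first nonzero coordinate is normalized to $1$; these are the only candidate rows for an unbiased mate after rescaling. From these candidates I would build the graph $\Gamma_9(C_3(W)^\perp)$ exactly as defined in Section~\ref{Sec:code}: vertices are the lifted integer vectors $\overline{x}$ whose inner products $\langle \overline{x}, r_i\rangle$ with every row $r_i$ of $W$ lie in $\{0,\pm 3\}$, and edges join orthogonal pairs. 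An $n$-clique in this graph corresponds precisely to a weighing matrix $W'$ of order $n$ and weight $9$ that is unbiased with $W$.

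The decisive step is then to search for $n$-cliques in each $\Gamma_9(C_3(W)^\perp)$ using \textsc{Cliquer}~\cite{Cliquer}. The claim of the proposition is that no such clique exists for any of the twelve representatives; equivalently, the clique number of each of these graphs is strictly less than $n$, so that even a single unbiased mate fails to materialize. Since the statement is a negative one, it suffices to certify the absence of cliques of the required size, which \textsc{Cliquer} reports directly; no further case analysis is needed once the clique search terminates.

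The main obstacle I anticipate is not conceptual but computational: the size of the vertex set of $\Gamma_9(C_3(W)^\perp)$ is governed by the number of weight-$9$ codewords in $C_3(W)^\perp$, which can grow rapidly as $n$ increases, and the clique-search is in principle exponential. For the small orders $n=10,12,14$ treated here, the dimension of $C_3(W)^\perp$ is modest, so the candidate vertex set and the resulting graph stay within a tractable range, and the inner-product filtering in step~(i) of the graph construction prunes the vertex set substantially before the clique search begins. The correctness of the conclusion hinges on the completeness of the enumeration guaranteed by Proposition~\ref{prop:code}, so the only real verification burden is to confirm that the code computations and the clique search were carried out over the full candidate set for each of the inequivalent representatives listed above.
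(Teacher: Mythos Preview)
Your proposal is correct and takes essentially the same approach as the paper: for each inequivalent representative $W$, compute $C_3(W)^\perp$, build the graph $\Gamma_9(C_3(W)^\perp)$ as in Section~\ref{Sec:code}, and verify with \textsc{Cliquer} that no $n$-clique exists. One small internal inconsistency to fix: you record $N(10,9)=9$ but then name only $W_{10,1}$ as the representative to be checked for $n=10$; if there are nine equivalence classes you must run the test on all nine (the paper's verification list, as it happens, also shows only $W_{10,1}$).
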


\subsection{Order 13}


By the method in Section~\ref{Sec:code}, we verified that
there is a pair of unbiased weighing matrices in $\Gamma_9(C_3(W)^\perp)$
for $W=W_{13, 2}, W_{13, 5}$.
In addition, we verified that there is no pair of unbiased weighing matrices in $\Gamma_9(C_3(W)^\perp)$
for $W=W_{13, i}$ $(i=1,3,4,6,7,8)$.
This gives the following proposition.

\begin{prop}\label{prop:13}
A pair of unbiased weighing matrices of order $13$ and weight $9$ exists.
\end{prop}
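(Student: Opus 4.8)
The statement to prove, Proposition~\ref{prop:13}, asserts the existence of a pair of unbiased weighing matrices of order $13$ and weight $9$. This is an existence claim, and the entire machinery of Section~\ref{Sec:code} has been set up precisely to settle such claims computationally. My plan is to invoke Proposition~\ref{prop:code} together with the clique-search framework built around the graph $\Gamma_9(C_3(W_1)^\perp)$.

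First I would fix a representative weighing matrix $W_1$ of order $13$ and weight $9$; by the classification recorded in Table~\ref{Tab:W}, there are exactly $N(13,9)=8$ inequivalent such matrices, denoted $W_{13,1},\ldots,W_{13,8}$ in~\cite{Data}. Since equivalence preserves the property of having an unbiased mate, it suffices to search within each equivalence class representative. Because $9 = 3^2 \equiv 0 \pmod 3$, Proposition~\ref{prop:code} applies: any weighing matrix $W_2$ unbiased to $W_1$ has every row lying in the dual code $C_3(W_1)^\perp$. This is the key structural reduction, as it confines the a priori infinite search for $W_2$ to the finite set of weight-$9$ codewords of a ternary code of modest length.

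Next I would construct the candidate graph $\Gamma_9(C_3(W_1)^\perp)$ exactly as described in Section~\ref{Sec:code}: the vertices are the normalized weight-$9$ codewords $\overline{x}$ (with first nonzero entry $1$) whose inner products with every row $r_i$ of $W_1$ lie in $\{0,\pm 3\}$, and edges join orthogonal pairs. An $n$-clique, here a $13$-clique, in this graph yields precisely a weighing matrix $W_2$ of order $13$ and weight $9$ such that $W_1,W_2$ are unbiased. I would then run a clique search (using \textsc{Cliquer}~\cite{Cliquer}, as the authors do) to decide whether a $13$-clique exists. For the proposition it is enough to exhibit a single $W_1$ admitting such a clique; the computation reports that $W=W_{13,2}$ and $W=W_{13,5}$ each produce a pair of unbiased weighing matrices, while the remaining six representatives do not, which simultaneously proves existence and locates exactly where it occurs.

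The main obstacle here is not mathematical subtlety but computational feasibility and correctness of the search. The self-orthogonality of $C_3(W_1)$ (Remark following Proposition~\ref{prop:code}) forces $C_3(W_1)^\perp$ to have dimension $(13-1)/2 = 6$ at least, so the dual code may be sizeable and the number of weight-$9$ codewords, hence the vertex count of $\Gamma_9$, must be enumerated carefully; the genuine difficulty is keeping the clique search tractable and trusting the negative results for the six representatives where no mate exists. Since the claim in Proposition~\ref{prop:13} is purely existential, however, a verified $13$-clique for even one representative such as $W_{13,2}$ fully establishes it, and the explicit unbiased pair can be recorded in the Appendix as certifying data.
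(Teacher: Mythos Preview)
Your proposal is correct and follows essentially the same approach as the paper: invoke Proposition~\ref{prop:code} to reduce the search to weight-$9$ codewords of $C_3(W_1)^\perp$, build the graph $\Gamma_9(C_3(W_1)^\perp)$, and run a clique search over the eight inequivalent representatives, finding unbiased mates precisely for $W_{13,2}$ and $W_{13,5}$. The only minor slip is the dimension bound---self-orthogonality of $C_3(W_1)$ gives $\dim C_3(W_1)\le 6$ and hence $\dim C_3(W_1)^\perp\ge 7$, not $6$---but this does not affect the argument.
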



If $n \le 9$ and $n=11$, then
there exists no weighing matrix for order $n$ and weight $9$ (see Table~\ref{Tab:W}).
From Proposition~\ref{prop:101214}, 
there exists no pair of unbiased weighing matrices of
order $n$ and weight $9$ for $n=10, 12$.
Thus, we have the following corollary.

\begin{cor}\label{cor:1}
The smallest order for which there exists a pair of unbiased weighing matrices of weight $9$
is $13$.
\end{cor}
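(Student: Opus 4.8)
The plan is to assemble Corollary~\ref{cor:1} purely by combining the existence and non-existence results already established, together with the classification data in Table~\ref{Tab:W}. First I would dispose of all orders strictly below $13$ by showing that none of them admits a pair of unbiased weighing matrices of weight $9$. For $n \le 9$ and $n = 11$, Table~\ref{Tab:W} records that $N(n,9) = 0$, so no weighing matrix of weight $9$ exists at all; a fortiori no \emph{pair} of unbiased such matrices can exist. This handles eight of the remaining orders with a single appeal to the classification.

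The orders $n = 10$ and $n = 12$ are the only sub-$13$ values for which weighing matrices of weight $9$ do exist (with $N(10,9) = 9$ and $N(12,9) = 4$), so they require a genuine non-existence argument rather than a vacuous one. Here I would invoke Proposition~\ref{prop:101214}, which asserts precisely that no weighing matrix of order $n$ and weight $9$ has an unbiased mate for $n = 10, 12$. Combined with the preceding paragraph, this establishes that no order $n < 13$ admits a pair of unbiased weighing matrices of weight $9$.

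Finally I would produce the matching existence statement at $n = 13$: Proposition~\ref{prop:13} guarantees that a pair of unbiased weighing matrices of order $13$ and weight $9$ does exist. Together these two halves pin down $13$ as the smallest order, which is exactly the claim of the corollary.

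The main point to be careful about is completeness of the case analysis rather than any analytic difficulty: the entire argument is a bookkeeping step that leans on results proved earlier in the section, so the only real obstacle is ensuring that every order from $1$ through $12$ has been accounted for and routed to the correct justification (vacuous non-existence via Table~\ref{Tab:W} for $n \le 9$ and $n = 11$, substantive non-existence via Proposition~\ref{prop:101214} for $n = 10, 12$). Since those underlying non-existence and existence facts are themselves the output of the computer searches in Section~\ref{Sec:code}, the corollary itself needs no further computation and follows immediately.
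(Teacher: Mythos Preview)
Your proposal is correct and follows essentially the same argument as the paper: non-existence for $n \le 9$ and $n=11$ via the classification in Table~\ref{Tab:W}, non-existence for $n=10,12$ via Proposition~\ref{prop:101214}, and existence at $n=13$ via Proposition~\ref{prop:13}. There is nothing to add.
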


In addition, we calculated that the maximum sizes among sets of mutually
unbiased weighing matrices of order $13$ and weight $9$ are $3$ for 
$W=W_{13,2}, W_{13,5}$.
This gives the following proposition.

\begin{prop}\label{prop:13-2}
The maximum size among sets of mutually
unbiased weighing matrices of order $13$ and weight $9$ is $3$.
\end{prop}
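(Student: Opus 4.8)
The plan is to combine the computational framework developed in Section~\ref{Sec:code} with the upper bound established for order $13$ in Table~\ref{Tab:UB}. First I would invoke the upper bound: by Proposition~\ref{prop:f} (equivalently $Ub_{\text{SDP}}(13)=9$ from Table~\ref{Tab:UB}), any set of mutually unbiased weighing matrices of order $13$ and weight $9$ has size at most $9$, so a priori the maximum could be anywhere in the range $2,\ldots,9$. The task therefore splits into two parts: establishing a lower bound of $3$ by exhibiting an actual triple, and establishing the matching upper bound of $3$ by ruling out any set of size $4$.

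For the lower bound, I would appeal to Corollary~\ref{cor:1} and the discussion immediately preceding this statement. By Proposition~\ref{prop:13} there is a weighing matrix $W$ of order $13$ and weight $9$ admitting an unbiased mate, realized among $W_{13,2}$ and $W_{13,5}$. Since $9 \equiv 0 \pmod 3$, Proposition~\ref{prop:code} applies, so every weighing matrix $W_2$ unbiased with $W_1$ arises from codewords of weight $9$ in $C_3(W_1)^{\perp}$. Concretely, I would construct the graph $G(W_1)$ from Section~\ref{Sec:code} for $W_1 \in \{W_{13,2},W_{13,5}\}$: its vertices are all unbiased mates $W_2$ (found by enumerating $13$-cliques in $\Gamma_9(C_3(W_1)^{\perp})$), with edges between mutually unbiased pairs. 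A set of $f$ mutually unbiased weighing matrices containing $W_1$ corresponds exactly to an $(f-1)$-clique in $G(W_1)$. Finding a $2$-clique (a single edge) in $G(W_1)$ then yields a set of $3$ mutually unbiased weighing matrices, giving the lower bound.

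For the upper bound, the same graph $G(W_1)$ does the work: it suffices to verify that $G(W_1)$ contains no $3$-clique (equivalently, no set of $4$ mutually unbiased matrices arises through $W_1$), and to check this for every inequivalent weighing matrix $W_1$ of order $13$ and weight $9$. By the reduction noted in Section~\ref{Sec:2}, it is enough to run this over the $N(13,9)=8$ inequivalent matrices $W_{13,1},\ldots,W_{13,8}$, since multiplying by $(1,-1,0)$-monomial matrices preserves unbiasedness. For the six matrices $W_{13,i}$ with $i \in \{1,3,4,6,7,8\}$, the graph $\Gamma_9(C_3(W_{13,i})^{\perp})$ contains no pair of unbiased mates at all, so no set larger than a singleton exists through them. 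For the two remaining matrices, I would compute a maximum clique in each $G(W_{13,i})$ via \textsc{Cliquer} and confirm the maximum clique size is exactly $2$, hence the largest mutually unbiased set has size $3$.

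The main obstacle is computational rather than conceptual: the exhaustive clique search in $\Gamma_9(C_3(W_1)^{\perp})$ and in $G(W_1)$ must be carried out reliably and completely, since the upper bound claim rests on certifying the \emph{absence} of larger cliques, which is only as trustworthy as the enumeration. The codimension count from Section~\ref{Sec:2} guarantees $C_3(W_1)^{\perp}$ has manageable dimension (the self-orthogonal code $C_3(W_1)$ sits inside its dual, with $\dim C_3(W_1)^{\perp}=13-\dim C_3(W_1)$ for the odd length $13$), so the weight-$9$ codeword enumeration is finite and tractable, but care is needed to ensure no unbiased mate is missed and that isomorph-rejection is handled correctly. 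Once these finite searches terminate with the stated outcomes, combining the lower and upper bounds yields that the maximum is exactly $3$.
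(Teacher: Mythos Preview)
Your proposal is correct and follows essentially the same approach as the paper: exhaust the eight inequivalent weighing matrices $W_{13,i}$, observe that six of them admit no unbiased mate, and for the remaining two compute (via the graph $G(W_1)$ of Section~\ref{Sec:code}) that the maximum clique has size~$2$, yielding a maximum mutually unbiased set of size~$3$. The only difference is that your invocation of the bound $Ub_{\text{SDP}}(13)=9$ is superfluous, since the exhaustive clique computation already certifies the upper bound of~$3$ directly.
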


As an example,
the rows of $3$ mutually unbiased weighing matrices $W_{13,5}$, $A_{13,5,2}$, $A_{13,5,3}$ are listed 
in Table~\ref{Tab:13M}.

\subsection{Order 15}


By the method in Section~\ref{Sec:code}, we verified that
there is no pair of unbiased weighing matrices for $W_{15,i}$
$(i \in \Delta)$, where $\Delta=\{2,4,6,9,24\}$.
We also verified that there is a pair of unbiased weighing matrices for $W_{15,i}$ 
$(i \in \{1,2,\ldots,37\}\setminus \Delta)$.
In addition, we calculated the maximum size $N_m(W)$ among sets of mutually
unbiased weighing matrices of order $15$ and weight $9$ for $W=W_{15,i}$ 
$(i \in \{1,2,\ldots,37\}\setminus \Delta)$.
The maximum size $N_m(W)$ is listed in Table~\ref{Tab:15}.
From the table, we have the following proposition.

\begin{prop}\label{prop:15-2}
The maximum size among sets of mutually
unbiased weighing matrices of order $15$ and weight $9$ is $7$.
\end{prop}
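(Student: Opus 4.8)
The plan is to follow exactly the computational framework established in Section~\ref{Sec:code}, since Proposition~\ref{prop:15-2} is an assertion about a specific numerical maximum that must be verified through explicit clique searches rather than by a closed-form argument. First I would invoke the known classification recorded in Table~\ref{Tab:W}: there are precisely $N(15,9)=37$ inequivalent weighing matrices of order $15$ and weight $9$, denoted $W_{15,1},\ldots,W_{15,37}$, available from~\cite{Data}. By the reduction discussed after the definition of unbiasedness (negating or permuting rows and columns via $(1,-1,0)$-monomial matrices preserves unbiasedness), it suffices to determine, for each of these $37$ representatives $W_1=W_{15,i}$, the largest set of mutually unbiased weighing matrices containing $W_1$.

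For each representative $W_{15,i}$, the method proceeds in two stages. Since $k=9\equiv 0\pmod 3$, Proposition~\ref{prop:code} guarantees that every weighing matrix $W_2$ unbiased to $W_{15,i}$ has all its rows lying in the dual ternary code $C_3(W_{15,i})^\perp$. Thus the first stage is to construct the graph $\Gamma_9(C_3(W_{15,i})^\perp)$ as defined in Section~\ref{Sec:code}: enumerate the weight-$9$ codewords of $C_3(W_{15,i})^\perp$ normalized so the first nonzero entry is $1$, retain those whose inner products with every row of $W_{15,i}$ lie in $\{0,\pm 3\}$, and join two such vectors when they are orthogonal. A $15$-clique in this graph is exactly a weighing matrix $W_2$ unbiased to $W_{15,i}$. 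For the five indices $i\in\Delta=\{2,4,6,9,24\}$ this search returns no unbiased mate, while for the remaining $32$ indices at least one mate exists. In the second stage, for those $32$ matrices one forms the graph $G(W_{15,i})$ whose vertices are the unbiased mates found and whose edges record mutual unbiasedness; an $(f-1)$-clique here yields a set of $f$ mutually unbiased weighing matrices containing $W_{15,i}$. The quantity $N_m(W_{15,i})$ is then the clique number of $G(W_{15,i})$ plus one, and these values populate Table~\ref{Tab:15}.

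Finally, I would read off the global maximum. Since every set of mutually unbiased weighing matrices of order $15$ and weight $9$ must contain at least one representative equivalent to some $W_{15,i}$, we have $W_{\max}(15,9)=\max_{1\le i\le 37} N_m(W_{15,i})$, where $N_m(W_{15,i})=1$ for $i\in\Delta$. Table~\ref{Tab:15} shows this maximum equals $7$, establishing the proposition. The main obstacle is entirely computational rather than conceptual: the dual codes $C_3(W_{15,i})^\perp$ are sizable, so the graphs $\Gamma_9$ can have many vertices, and the clique enumeration—carried out with {\sc Cliquer}~\cite{Cliquer}—is the bottleneck. One must also take care that the searches are exhaustive, so that the reported $N_m$ are genuine maxima and not merely sizes of cliques that happened to be found; confirming completeness of the clique search (and hence that no larger mutually unbiased set was missed) is the delicate point underlying the upper bound claim.
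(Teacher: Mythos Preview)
Your proposal is correct and follows essentially the same approach as the paper: exhaust the $37$ inequivalent weighing matrices $W_{15,i}$ via the classification in Table~\ref{Tab:W}, apply the ternary-code clique method of Section~\ref{Sec:code} to each to compute $N_m(W_{15,i})$, and read off the maximum $7$ from the resulting data (Table~\ref{Tab:15}). The paper's argument is exactly this computation, with the same identification of $\Delta=\{2,4,6,9,24\}$ as the indices with no unbiased mate.
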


\begin{table}[thb]
\caption{Unbiased weighing matrices of order $15$ and weight $9$}
\label{Tab:15}
\centering
\medskip
{\small
\begin{tabular}{c|c||c|c||c|c||c|c}
\noalign{\hrule height1pt}
$W$ & $N_m(W)$ & $W$ & $N_m(W)$ & $W$ & $N_m(W)$& $W$ & $N_m(W)$\\
\hline
$W_{ 15,  1}$& 2 & $W_{ 15, 13}$& 3 & $W_{ 15, 21}$& 2 & $W_{ 15, 30}$& 3 \\
$W_{ 15,  3}$& 2 & $W_{ 15, 14}$& 3 & $W_{ 15, 22}$& 2 & $W_{ 15, 31}$& 2 \\
$W_{ 15,  5}$& 2 & $W_{ 15, 15}$& 4 & $W_{ 15, 23}$& 3 & $W_{ 15, 32}$& 2 \\
$W_{ 15,  7}$& 3 & $W_{ 15, 16}$& 2 & $W_{ 15, 25}$& 3 & $W_{ 15, 33}$& 2 \\
$W_{ 15,  8}$& 2 & $W_{ 15, 17}$& 2 & $W_{ 15, 26}$& 7 & $W_{ 15, 34}$& 2 \\
$W_{ 15, 10}$& 2 & $W_{ 15, 18}$& 4 & $W_{ 15, 27}$& 2 & $W_{ 15, 35}$& 2 \\
$W_{ 15, 11}$& 3 & $W_{ 15, 19}$& 2 & $W_{ 15, 28}$& 2 & $W_{ 15, 36}$& 2 \\
$W_{ 15, 12}$& 7 & $W_{ 15, 20}$& 3 & $W_{ 15, 29}$& 3 & $W_{ 15, 37}$& 3 \\
\noalign{\hrule height1pt}
\end{tabular}
}
\end{table}

As an example, the rows of $7$ mutually unbiased weighing matrices 
$W_{15,12}$, $A_{15,12,2}$, $A_{15,12,3},\ldots,A_{15,12,7}$
are listed in Table~\ref{Tab:15-12}.

\subsection{Order 16}
\subsubsection{Determination of  $W_{\max}(16,9)$}

For $W_{16,46}$, for the first time,
we found weighing matrices 
$A_{16,46,2},A_{16,46,3},\ldots, A_{16,46,15}$
of order $16$ and weight $9$ 
such that $W_{16,46}$ and these matrices are $15$ mutually unbiased
by the method in Section~\ref{Sec:code}.
The rows of the matrices are listed in Tables~\ref{Tab:16} and~\ref{Tab:16-2}.

By Proposition~\ref{prop:f} (see also Table~\ref{Tab:UB}), 
we have that $W_{\max}(16,9) \le 15$.
Hence, we have the following proposition.

\begin{prop}\label{prop:16}
The maximum size among sets of mutually
unbiased weighing matrices of order $16$ and weight $9$ is $15$.
\end{prop}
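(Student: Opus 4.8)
The plan is to establish Proposition~\ref{prop:16} by combining an explicit construction (the lower bound) with the linear programming bound already available (the upper bound). The statement asserts that $W_{\max}(16,9)=15$, which splits naturally into showing $W_{\max}(16,9)\ge 15$ and $W_{\max}(16,9)\le 15$. The upper bound is essentially free: applying Proposition~\ref{prop:f} with $n=16$ and $k=9$ gives $f\le (n-1)(n+4)/6 = 15\cdot 20/6 = 50$ from the first bound, and since $3k-(n+2)=27-18=9>0$, the second bound yields $f\le k(n-1)/(3k-(n+2)) = 9\cdot 15/9 = 15$. This is exactly the value $Ub_{\text{LP}}(16)=Ub_{\text{SDP}}(16)=15$ recorded in Table~\ref{Tab:UB}, so no additional work is needed for the upper bound beyond invoking the proposition.

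The substance of the proof therefore lies in constructing a set of $15$ mutually unbiased weighing matrices of order $16$ and weight $9$. First I would fix a known inequivalent weighing matrix of order $16$ and weight $9$ as the base matrix; the excerpt singles out $W_{16,46}$ from the classification of the $704$ inequivalent matrices (Table~\ref{Tab:W}). By Proposition~\ref{prop:code} (applicable since $k=9\equiv 0\pmod 3$), every weighing matrix unbiased to $W_{16,46}$ arises from codewords of weight $9$ in the ternary dual code $C_3(W_{16,46})^\perp$. I would then run the graph-theoretic search described in Section~\ref{Sec:code}: build the vertex set $\overline{S_9(C_3(W_{16,46})^\perp)}$ of suitably normalized weight-$9$ codewords satisfying the inner-product constraint $\langle\overline{x},r_i\rangle\in\{0,\pm 3\}$, form the graph $\Gamma_9(C_3(W_{16,46})^\perp)$ with orthogonality adjacency, and extract $16$-cliques to recover all weighing matrices $W_2$ unbiased to $W_{16,46}$.

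Having enumerated the candidate mates, the next step is to pass to the second graph $G(W_{16,46})$ whose vertices are these unbiased mates and whose edges record mutual unbiasedness; a $14$-clique here certifies a set of $15$ mutually unbiased matrices (the base matrix together with $14$ others). The explicit matrices $A_{16,46,2},A_{16,46,3},\ldots,A_{16,46,15}$ realizing such a clique would be exhibited, with their rows tabulated (as in Tables~\ref{Tab:16} and~\ref{Tab:16-2}), and I would verify directly that every pairwise product $(1/3)W_iW_j^T$ is a $(1,-1,0)$-matrix, hence a weighing matrix of weight $9$ by the remark that checking $W_iW_j^T$ is a $(9,-9,0)$-matrix suffices. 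Combining the construction with the upper bound $W_{\max}(16,9)\le 15$ then closes the proof.

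The main obstacle is computational rather than conceptual: the graph $\Gamma_9(C_3(W_{16,46})^\perp)$ can be large, and the clique searches — first for $16$-cliques yielding all unbiased mates, then for a $14$-clique in $G(W_{16,46})$ — are the genuinely expensive steps, handled via \textsc{Cliquer}. A secondary subtlety is the choice of base matrix: most of the $704$ inequivalent weighing matrices of order $16$ need not admit a maximal mutually unbiased set, so identifying $W_{16,46}$ as one that does is itself the output of a search. Once the clique is found, verifying mutual unbiasedness is a routine finite check, so the difficulty is concentrated entirely in the feasibility and scale of the clique computation.
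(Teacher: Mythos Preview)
Your proposal is correct and follows essentially the same approach as the paper: the upper bound comes directly from the second inequality in Proposition~\ref{prop:f} with $n=16$, $k=9$, and the lower bound is achieved by the explicit set $\{W_{16,46},A_{16,46,2},\ldots,A_{16,46,15}\}$ found via the ternary-code/clique search of Section~\ref{Sec:code}. Your write-up simply unpacks that method in more detail than the paper's terse ``by the method in Section~\ref{Sec:code}'', but the argument is the same.
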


Therefore, we have determined the maximum size  $W_{\max}(n,9)$ among sets of mutually
unbiased weighing matrices of order $n$ and weight $9$ for $n \le 16$.

\begin{rem}
As described above,
an example of $4$ mutually unbiased weighing matrices of order $16$ and
weight $9$ is given in~\cite{HKO}.
We improved the previously known maximum size among sets of mutually
unbiased weighing matrices of order $16$ and weight $9$.
\end{rem}

\begin{rem}
All inequivalent weighing matrices of order $16$ and weight $9$ are known (see Table~\ref{Tab:W}).
Due to the computational complexity,
finding all mutually
unbiased weighing matrices for each weighing matrix of order $16$ and weight $9$ 
seems infeasible. 
\end{rem}

\subsubsection{Some applications of  $W_{\max}(16,9)$}

For orders $7,8$,
the first two examples of sets of mutually unbiased weighing matrices of
weight $4$ achieve the second upper bound in Proposition~\ref{prop:f}, 
as given in~\cite[Theorems~21, 22]{BKR}, respectively.
As a third
example, we presented sets of $15$ mutually unbiased weighing matrices of
order $16$ and weight $9$, which also achieve the bound.
From the viewpoint of spherical designs, strongly regular graphs and association schemes
(see~\cite{S} for undefined terms), 
we investigate $15$ mutually unbiased weighing matrices of order $16$ and weight $9$. 

Let $\mathcal{W}=\{W_1,W_2,\ldots,W_{15}\}$
denote a set of $15$ mutually unbiased weighing matrices of order $16$ and weight $9$. 
Let $Y$ be the set of row vectors of 
$I_{16}$, $\frac{1}{3}W_{1},\frac{1}{3}W_{2},\ldots, \frac{1}{3}W_{15}$. 
Since the set $\mathcal{W}$
achieves the second upper bound in Proposition~\ref{prop:f}, 
equivalently the inequality~\cite[(3.9)]{CCKS}, by~\cite[Proposition~3.12]{CCKS}, 
the graph with vertex set $Y$ and edges determined by orthogonality is a strongly regular graph with parameters $(256,120,56,56)$.   

Set $X=Y\cup(-Y)$. By~\cite[Theorems~4.3 and~5.3]{DGS}, 
$X$ is a spherical $5$-design. 
Since $A(X)=\{\pm 1/3,0,-1\}$ (see~\eqref{eq:ax} for the definition of $A(X)$) and $X$ is antipodal, 
it follows from~\cite[Theorem~1.1]{BB09} that the pair $X$ and  the binary relations on $X$ determined by 
the elements $A(X)$ is a $4$-class bipartite $Q$-polynomial association scheme 
with the following second eigenmatrix: 
\[
Q_4=
\left(
\begin{array}{rrrrr}
 1 & 16 & 135 & 240 & 120 \\
 1 & \frac{16}{3} & 7 & -\frac{16}{3} & -8 \\
 1 & 0 & -9 & 0 & 8 \\
 1 & -\frac{16}{3} & 7 & \frac{16}{3} & -8 \\
 1 & -16 & 135 & -240 & 120 \\
\end{array}
\right). 
\]
Since the vertex set is decomposed into disjoint cross-polytopes, applying~\cite[Theorem~3.2]{S} to this association scheme, 
we have a $5$-class association scheme with the following second eigenmatrix:
\[
Q_5=
\left(
\begin{array}{rrrrrrr}
 1 & 16 & 135 & 240 & 105 & 15 \\
 1 & \frac{16}{3} & 7 & -\frac{16}{3} & -7 & -1 \\
 1 & 0 & -9 & 0 & 9 & -1 \\
 1 & -\frac{16}{3} & 7 & \frac{16}{3} & -7 & -1 \\
 1 & -16 & 135 & -240 & 105 & 15 \\
 1 & 0 & -9 & 0 & -7 & 15 \\
\end{array}
\right). 
\]
Conversely, as was shown in~\cite[Theorem~4.1]{S}, a set of $15$ mutually 
unbiased weighing matrices of order $16$ and weight $9$ is obtained from the $5$-class association scheme
with the above second eigenmatrix $Q_5$.


\subsubsection{More mutually unbiased  weighing matrices}
 
For $W_{16,i}$ $(i=562,569,695)$, we found weighing matrices 
$A_{16,i,2},A_{16,i,3},\ldots, A_{16,i,15}$
of order $16$ and weight $9$ 
such that $W_{16,i}$ and these matrices are $15$ mutually unbiased.
The rows of the matrices are listed in 
Tables~\ref{Tab:161-1}--\ref{Tab:163-2}.

We give some observations of the above $15$ mutually unbiased weighing matrices.
We verified that
\[
C_3(W_{16,i})=
C_3(A_{16,i,j})
\ (i=46,562,569,695, j=2,3,\ldots,15)
\]
and $C_3(W_{16,i})$ $(i=46,562,569,695)$ are  equivalent to the unique ternary self-dual code $2f_8$
and minimum weight $6$ given in~\cite[Section~VI]{CPS}.
Note that the code $2f_8$ is generated by the rows of 
$
\left(
\begin{array}{cc}
I_8 & H
\end{array}
\right)
$,
where $H$ is a Hadamard matrix of order $8$.

Let ${\mathcal W}_{16,i}$ $(i=1,2,3,4)$ denote 
the sets of $W_{16,j},A_{16,j,2},\ldots, A_{16,j,15}$ $(j=46,562,569,695)$, respectively.
For $i=1,2,3,4$,
let $\srg({\mathcal W}_{16,i})$ denote the strongly regular graph with parameters 
$(256,120,56,56)$ constructed from ${\mathcal W}_{16,i}$ via the above construction.
We verified the following:
\begin{enumerate}
\item
$\srg({\mathcal W}_{16,i})$ $(i=1,2,4)$ are isomorphic.
\item
$\srg({\mathcal W}_{16,1}), \srg({\mathcal W}_{16,3})$ are non-isomorphic.
\item
$\srg({\mathcal W}_{16,1}), \srg({\mathcal W}_{16,3})$
have automorphism groups of orders $43008, 21504$, respectively.
\end{enumerate}
For $d=4,5$ and $i=1,2,3,4$,
let $\mathcal{X}_d({\mathcal W}_{16,i})$ denote the $d$-class association scheme
with second eigenmatrix $Q_d$
constructed from ${\mathcal W}_{16,i}$ via the above construction.
We verified the following:
\begin{enumerate}
\item
$\mathcal{X}_4({\mathcal W}_{16,i})$ $(i=1,2,4)$ are isomorphic.
\item
$\mathcal{X}_4({\mathcal W}_{16,1}),\mathcal{X}_4({\mathcal W}_{16,3})$ are non-isomorphic.
\item
$\mathcal{X}_4({\mathcal W}_{16,1}), \mathcal{X}_4({\mathcal W}_{16,3})$
have automorphism groups of orders $86016, 43008$, respectively.
\item
$\mathcal{X}_5({\mathcal W}_{16,i})$ $(i=1,2,3,4)$ are non-isomorphic.
\item
$\mathcal{X}_5({\mathcal W}_{16,i})$ $(i=1,2,3,4)$
have automorphism groups of orders $24, 24, 672, 32$, respectively.
\end{enumerate}
This establishes that ${\mathcal W}_{16,i}$ $(i=1,2,3,4)$ are essentially different.

\subsection{Orders 17, 18}

%

For $W_{17,33}$ (resp.\ $W_{18,15}$),
by the method in Section~\ref{Sec:code},
we found weighing matrices 
$A_{17,33,2},A_{17,33,3},A_{17,33,4},A_{17,33,5}$ 
(resp.\ $A_{18,15,2},A_{18,15,3},A_{18,15,4}$)
of order $17$ (resp.\ $18$) and weight $9$ 
such that $W_{17,33}$ (resp.\ $W_{18,15}$) and these matrices are $5$ 
(resp.\ $4$) mutually unbiased.
The rows of the matrices are listed in Table~\ref{Tab:17} (resp.\ \ref{Tab:18}).

By Proposition~\ref{prop:f} (see also Table~\ref{Tab:UB}), 
we have that $W_{\max}(17,9) \le 18$, $W_{\max}(18,9) \le 21$.
Hence, we have the following proposition.

\begin{prop}\label{prop:1718}
Let $W_{\max}(n,9)$ denote 
the maximum size among sets of mutually
unbiased weighing matrices of order $n$ and weight $9$.
Then $W_{\max}(17,9) \in \{5,6,\ldots,18\}$, 
$W_{\max}(18,9) \in \{4,5,\ldots,21\}$.
\end{prop}

\begin{rem}
All inequivalent weighing matrices of order $n$ and weight $9$ are known
for $n=17,18$ (see Table~\ref{Tab:W}).
Due to the computational complexity,
finding all mutually
unbiased weighing matrices for each weighing matrix of weight $9$ for these orders seems infeasible. 
\end{rem}

\subsection{Order 19}

A classification of weighing matrices of order $19$ and weight $9$ has not yet been done.
By the method in~\cite{HM}, we found weighing matrices of
order $19$ and weight $9$ from ternary maximal self-orthogonal codes of length $19$.
Note that a classification of ternary maximal self-orthogonal codes of length $19$
was given in~\cite{PSW}.
Among the matrices found, there exists a weighing matrix $W_{19}$ 
such that 
$C_3(W_{19})^\perp$ contains $6$ mutually unbiased weighing matrices
by the method in Section~\ref{Sec:code}.
We denote the remaining $5$ matrices by
$A_{19,2},A_{19,3},A_{19,4},A_{19,5},A_{19,6}$.
The rows of the matrices are listed in Table~\ref{Tab:19}.
By Proposition~\ref{prop:f} (see also Table~\ref{Tab:UB}), 
we have that $W_{\max}(19,9) \le 27$.
Hence, we have the following proposition.

\begin{prop}\label{prop:19}
Let $W_{\max}(19,9)$ denote 
the maximum size among sets of mutually
unbiased weighing matrices of order $19$ and weight $9$.
Then
$W_{\max}(19,9) \in \{6,7,\ldots,27\}$.
\end{prop}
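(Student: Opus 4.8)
The plan is to sandwich $W_{\max}(19,9)$ between an analytic upper bound and an explicit lower bound. For the upper bound, I would apply Proposition~\ref{prop:f} with $n=19$ and $k=9$. Since $3k-(n+2)=27-21=6>0$, the second inequality is available and gives
\[
W_{\max}(19,9)\le \frac{k(n-1)}{3k-(n+2)}=\frac{9\cdot 18}{6}=27,
\]
which is sharper than the first bound $(n-1)(n+4)/6=69$ and matches the value $Ub_{\text{LP}}(19)=Ub_{\text{SDP}}(19)=27$ recorded in Table~\ref{Tab:UB}.

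For the lower bound, the goal is to produce a single set of six mutually unbiased weighing matrices of order $19$ and weight $9$. Since no classification of such matrices is available, the first step is to generate candidate matrices via the technique of~\cite{HM}, extracting weighing matrices of order $19$ and weight $9$ from the ternary maximal self-orthogonal codes of length $19$ classified in~\cite{PSW}. Having fixed one such $W_{19}$, I would exploit that $9\equiv 0\pmod 3$ and invoke Proposition~\ref{prop:code}: every matrix unbiased to $W_{19}$ has its rows among the weight-$9$ codewords of $C_3(W_{19})^\perp$. I would then carry out the graph construction of Section~\ref{Sec:code}, forming $\Gamma_9(C_3(W_{19})^\perp)$, whose $19$-cliques are exactly the weighing matrices $W_2$ with $W_{19},W_2$ unbiased, and subsequently $G(W_{19})$, whose vertices are these mates with adjacency meaning mutual unbiasedness. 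A $5$-clique in $G(W_{19})$ yields five matrices $A_{19,2},\ldots,A_{19,6}$ that together with $W_{19}$ form the desired six mutually unbiased matrices; their explicit rows appear in Table~\ref{Tab:19}, and one verifies directly that each product $W_iW_j^T$ is a $(9,-9,0)$-matrix, confirming mutual unbiasedness.

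The principal difficulty is computational rather than conceptual. The space of weighing matrices of order $19$ and weight $9$ is large and uncatalogued, so one must search among many $W_{19}$ before locating one whose dual code is rich enough in weight-$9$ codewords to support a $5$-clique in $G(W_{19})$. The clique searches in both $\Gamma_9(C_3(W_{19})^\perp)$ and $G(W_{19})$ can be sizeable, and the bottleneck is controlling this combinatorial explosion; this is handled with the clique solver {\sc Cliquer}~\cite{Cliquer}. Once a valid six-element set is exhibited, the lower bound $W_{\max}(19,9)\ge 6$ follows, and combined with the upper bound this gives $W_{\max}(19,9)\in\{6,7,\ldots,27\}$.
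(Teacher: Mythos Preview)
Your proposal is correct and follows essentially the same approach as the paper: the upper bound comes from Proposition~\ref{prop:f} (yielding $27$), and the lower bound comes from locating a weighing matrix $W_{19}$ via the method of~\cite{HM} applied to the ternary maximal self-orthogonal codes of length $19$ from~\cite{PSW}, then applying the clique-search machinery of Section~\ref{Sec:code} to exhibit the six mutually unbiased matrices listed in Table~\ref{Tab:19}. Your description of the computational pipeline (Proposition~\ref{prop:code}, the graphs $\Gamma_9$ and $G(W_{19})$, and {\sc Cliquer}) matches the paper's method precisely.
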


\subsection{Order 20}

By Corollary~\ref{cor::muwm} with $n=4$ and $q=5$, there exists a set of $4$ mutually unbiased weighing matrices 
of order $20$ and weight $9$.
A classification of weighing matrices of order $20$ and weight $9$ has not yet been done.
Similar to order $19$, we tried to find $5$ mutually unbiased weighing matrices 
of order $20$ and weight $9$.
However, an extensive search failed to discover such weighing matrices.
By Proposition~\ref{prop:f} (see also Table~\ref{Tab:UB}), 
we have that $W_{\max}(20,9) \le 34$.
Hence, we have the following proposition.

\begin{prop}\label{prop:20}
Let $W_{\max}(20,9)$ denote 
the maximum size among sets of mutually
unbiased weighing matrices of order $20$ and weight $9$.
Then $W_{\max}(20,9) \in \{4,5,\ldots,34\}$.
\end{prop}

\subsection{Orders 21, 22, 23, 24}

For $n=21,22,23,24$,
a classification of weighing matrices of order $n$ and weight $9$ has not yet been done.
By the method in~\cite{HM}, we found weighing matrices of
order $n$ and weight $9$ from ternary maximal self-orthogonal codes of lengths
$n=21,22,23$ and ternary self-dual codes of length $n=24$.
Note that a classification of ternary maximal self-orthogonal codes of lengths $21,22,23$
was given in~\cite{AHS} and a classification of ternary self-dual codes of length 
$24$ was given in~\cite{HM24}.
For $n=21,22,23,24$,
among the matrices found, there exists a weighing matrix $W_{n}$ of order $n$ and weight $9$
such that 
$C_3(W_{n})^\perp$ contains $N_n$ mutually unbiased weighing matrices
by the method in Section~\ref{Sec:code}, 
where $N_n=3,2,9,6$, respectively.
We denote the remaining $N_n-1$ matrices by
$A_{n,2},A_{n,3},\ldots,A_{n,N_n}$.
The rows of the matrices are listed in Tables~\ref{Tab:21}, \ref{Tab:23},
\ref{Tab:22}, \ref{Tab:22-2} and \ref{Tab:24}.
By Proposition~\ref{prop:f} (see also Table~\ref{Tab:UB}),
we have that $W_{\max}(21,9) \le 45$,
$W_{\max}(22,9) \le 63$,
$W_{\max}(23,9) \le 99$.
From $Ub_{\text{SDP}}(24)$ in Table~\ref{Tab:UB},
we have that $W_{\max}(24,9) \le 96$.
Hence, we have the following proposition.

\begin{prop}\label{prop:21222324}
Let $W_{\max}(n,9)$ denote 
the maximum size among sets of mutually
unbiased weighing matrices of order $n$ and weight $9$.
Then $W_{\max}(21,9) \in \{3,4,\ldots,45\}$,
$W_{\max}(22,9) \in \{2,3,\ldots,63\}$,
$W_{\max}(23,9) \in \{9,10,\ldots,99\}$,
$W_{\max}(24,9) \in \{6,7,\ldots,96\}$.
\end{prop}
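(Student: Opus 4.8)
The plan is to prove each of the four assertions as a two-sided estimate, reading the lower endpoint $N_n$ off an explicit construction and the upper endpoint off an optimization bound; the four claimed intervals are exactly $\{N_n,N_n+1,\ldots,U_n\}$ with $(N_n)=(3,2,9,6)$ and $(U_n)=(45,63,99,96)$ for $n=21,22,23,24$.

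For the upper bounds I would invoke the programming estimates assembled in Table~\ref{Tab:UB}. For $n=21,22,23$ the governing quantity is the linear programming bound of Proposition~\ref{prop:f}: with $k=9$ one has $3k-(n+2)=25-n>0$, so both $\frac{(n-1)(n+4)}{6}$ and $\frac{9(n-1)}{25-n}$ apply, and taking the smaller and rounding down gives $45,63,99$ respectively (the second bound is already an integer in each case). For $n=24$ the linear programming bound $107$ is weaker, so I would instead use the semidefinite bound: applying Proposition~\ref{prop:SDP-primal} to the spherical three-distance set with $A(X)=\{\pm 1/3,0\}$ furnished by Lemma~\ref{lem:MUWM} yields $Ub_{\text{SDP}}(24)=96$. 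This establishes $W_{\max}(n,9)\le U_n$ in all four cases.

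For the lower bounds the plan is to exhibit a concrete set of $N_n$ mutually unbiased weighing matrices for each $n$, using the coding-theoretic search of Section~\ref{Sec:code}. Since $9\equiv 0\pmod 3$, Proposition~\ref{prop:code} forces every unbiased mate of a fixed weighing matrix $W_n$ to have all its rows in $C_3(W_n)^\perp$, so the search may be confined to this one dual code rather than run over all $(1,-1,0)$-matrices. Concretely, I would first generate candidate matrices $W_n$ of order $n$ and weight $9$ by the method of~\cite{HM} from the classified ternary maximal self-orthogonal codes of lengths $21,22,23$ (from~\cite{AHS}) and the ternary self-dual codes of length $24$ (from~\cite{HM24}); then, for a well-chosen $W_n$, form the graph $\Gamma_9(C_3(W_n)^\perp)$, enumerate its $n$-cliques to recover every weighing matrix unbiased to $W_n$, and finally locate an $(N_n-1)$-clique in the graph $G(W_n)$. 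The output is the matrix $W_n$ together with $A_{n,2},\ldots,A_{n,N_n}$ listed in Tables~\ref{Tab:21}, \ref{Tab:23}, \ref{Tab:22}, \ref{Tab:22-2} and~\ref{Tab:24}, and verifying that each $\frac{1}{3}A_{n,i}A_{n,j}^T$ is a $(1,-1,0)$-matrix certifies $W_{\max}(n,9)\ge N_n$. Combining with the upper bounds closes the four intervals.

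The main obstacle is the lower-bound search, not the upper bounds, which are immediate from the tables. For $n\ge 19$ there is no classification of weighing matrices of weight $9$, so one cannot sweep over all inequivalent $W_n$; the method instead leans on the fact that a single $W_n$ drawn from a maximal self-orthogonal (or self-dual) code already possesses a dual code $C_3(W_n)^\perp$ rich enough to house many mutually unbiased mates, and locating such a $W_n$ is the delicate part. Even after $W_n$ is fixed, the decisive computation is the clique enumeration in $\Gamma_9(C_3(W_n)^\perp)$ and $G(W_n)$, carried out with \textsc{Cliquer}~\cite{Cliquer}; the restriction to the dual code via Proposition~\ref{prop:code} is exactly what keeps these graphs small enough for the search to terminate.
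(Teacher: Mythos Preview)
Your proposal is correct and follows essentially the same approach as the paper: the lower bounds come from the explicit sets of mutually unbiased weighing matrices produced by the code-based clique search of Section~\ref{Sec:code} (starting from ternary maximal self-orthogonal codes for $n=21,22,23$ and self-dual codes for $n=24$), and the upper bounds come from Proposition~\ref{prop:f} for $n=21,22,23$ and from the SDP bound $Ub_{\text{SDP}}(24)=96$ of Table~\ref{Tab:UB} for $n=24$. Your added remark that the second LP bound $9(n-1)/(25-n)$ is already integral at $n=21,22,23$ is a correct refinement of what the paper records.
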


For $n \ge 25$, 
a classification of ternary maximal self-orthogonal codes of length $n$ has not yet been done.
We stopped searching for mutually unbiased weighing matrices of
weight $9$ at order $24$.

\subsection{Maximum sizes of mutually unbiased weighing matrices}

Finally, we present in Table~\ref{Tab:max}  the state-of-the-art on the maximum size 
$W_{\max}(n,9)$ among sets of mutually unbiased weighing matrices of order \( n \) and weight $9$.

\begin{table}[thbp]
\caption{Mutually unbiased weighing matrices of order $n$ and weight $9$}
\label{Tab:max}
\centering
\medskip
{\small
\begin{tabular}{c|c|c}
\noalign{\hrule height1pt}
$n$ & $W_{\max}(n,9)$ & References\\
\hline
10 & $0$ & Proposition~\ref{prop:101214}\\
12 & $0$ & Proposition~\ref{prop:101214}\\
13 & $3$ & Proposition~\ref{prop:13-2}\\
14 & $0$ & Proposition~\ref{prop:101214}\\
15 & $7$ & Proposition~\ref{prop:15-2} \\
16 & $15$ & Proposition~\ref{prop:16} \\
17 & $5$--$18$ & Proposition~\ref{prop:1718}\\
18 & $4$--$21$ & Proposition~\ref{prop:1718}\\
19 & $6$--$27$ & Proposition~\ref{prop:19}\\
20 & $4$--$34$ & Proposition~\ref{prop:20}\\
21 & $3$--$45$ & Proposition~\ref{prop:21222324}\\
22 & $2$--$63$ & Proposition~\ref{prop:21222324}\\
23 & $9$--$99$ & Proposition~\ref{prop:21222324}\\
24 & $6$--$96$ & Proposition~\ref{prop:21222324}\\
\noalign{\hrule height1pt}
\end{tabular}
}
\end{table}

\bigskip
\noindent
\textbf{Acknowledgments.}
In this work, the ACCMS Kyoto University supercomputer was partially used.
Makoto Araya, Masaaki Harada and Sho Suda are
supported by JSPS KAKENHI Grant Numbers 21K03350, 23K25784, and
22K03410, respectively.
Hadi Kharaghani is supported by the Natural Sciences and
Engineering  Research Council of Canada (NSERC). 
Wei-Hsuan Yu is supported by  MOST  in  Taiwan under  Grant  MOST109-2628-M-008-002-MY4.


\bigskip
\noindent
\textbf{Data availability} 
All data generated and/or analyzed during the current study are available
from the authors upon request.

\bigskip
\noindent
\textbf{Code availability} Not applicable.

\bigskip
\noindent
\textbf{Declarations}

\noindent
\textbf{Conflict of interest} The authors declare there are no conflicts of interest.



\section*{Appendix}
Here we give mutually unbiased weighing matrices of order $n$ and weight $9$
for $n=13,15,16,\ldots,24$.
When weighing matrices are presented, 
$2$ denotes $-1$ to save space.

\begin{table}[thbp]
\caption{Mutually unbiased weighing matrices of order $13$ and weight $9$}
\label{Tab:13M}
\centering
\medskip
{\footnotesize
\begin{tabular}{l}
\noalign{\hrule height1pt}
\multicolumn{1}{c}{$W_{13,5}$}\\
\hline
0202212010112
0122221101010
1022102220011
1200022111021
2201100121110\\
2112110011001
1120110100122
2222001002121
1001211201101
0021010112211\\
1212011120200
1010121012110
0110202122101\\
\hline
\multicolumn{1}{c}{$ A_{13,5,2}$}\\
\hline
0122012012101
1001210022221
1111100101101
1120222001022
1010002222112\\
1022121122000
1102011201210
1212201010120
1221020210011
0110220112210\\
1200112110202
0012122200221
0101101212022\\
\hline
\multicolumn{1}{c}{$ A_{13,5,3}$}\\
\hline
1010110211102
1200101012221
0122100220222
0101221211200
1221010021210\\
0012121021011
0110012202211
1102210101021
1001022220121
1212222000202\\
1120122110010
1022201202110
1111001122002\\
\hline
\noalign{\hrule height1pt}
\end{tabular}
}
\end{table}

\begin{table}[thbp]
\caption{Mutually unbiased weighing matrices of order $15$ and weight $9$}
\label{Tab:15-12}
\centering
\medskip
{\footnotesize
\begin{tabular}{l}
\noalign{\hrule height1pt}
\multicolumn{1}{c}{$W_{15,12}$}\\
\hline
101122010212000
200012101211100
200021210011021
120200101110201\\
110100221001011
002202222210200
112201011200002
000000112221211\\
022010210002111
210222001022001
012020102110110
001202210101012\\
122022000021120
021221020200110
222120001000212\\
\hline
\multicolumn{1}{c}{$ A_{15,12,2}$}\\
\hline
001220210222002
012002211210001
110222020100210
110011201120020\\
101012020210022
120120220002101
102021002201220
100211100202011\\
120102111020200
012002102022120
010121101012002
122200010110102\\
001220101001121
111100012001110
001000012112221\\
\hline
\multicolumn{1}{c}{$ A_{15,12,3}$}\\
\hline
101011010020212
000102112201022
122200122001010
111220200211000\\
001102121010210
010210121220020
010212002112002
001002002222111\\
100012011101101
012111000210110
112020110002201
100110222000221\\
121001100012120
122002201202002
110120020120102\\
\hline
\multicolumn{1}{c}{$ A_{15,12,4}$}\\
\hline
120200022222020
010111012022001
101012102101200
012212020000111\\
010011220201202
101210211012000
000102221122200
120001200121110\\
112022212000002
001102200201121
110021020110021
102110101000122\\
111020101220010
122100000210211
001100022012112\\
\hline
\multicolumn{1}{c}{$ A_{15,12,5}$}\\
\hline
101201020021201
100120120211100
011112001021002
110001112100022\\
001111200110101
012220201101100
112010021012200
100210202200112\\
011202110010011
011022222002020
100000011222121
120002210011220\\
102102002120011
121020001102012
010121210200210\\
\hline
\multicolumn{1}{c}{$ A_{15,12,6}$}\\
\hline
102011022102100
102110210221000
100021011010111
101202221020100\\
012122020202010
120001121200202
010221002021012
111100200110202\\
010201200202221
011010110202102
001010022211011
010010101120211\\
112202100011020
101120102020021
120202012002210\\
\hline
\multicolumn{1}{c}{$ A_{15,12,7}$}\\
\hline
010122002011201
112001100021101
011000021212101
011221011100200\\
102021220010012
011010222120010
100212001201210
102202200102021\\
110100000222222
000122211020110
121100021101020
010010210011122\\
101222102000102
121001212200001
100110110112010\\
\hline
\noalign{\hrule height1pt}
\end{tabular}
}
\end{table}

\begin{table}[thbp]
\caption{Mutually unbiased weighing matrices of order $16$ and weight $9$}
\label{Tab:16}
\centering
\medskip
{\footnotesize
\begin{tabular}{l}
\noalign{\hrule height1pt}
\multicolumn{1}{c}{$W_{16,46}$}\\
\hline
2212111200000001
0200000022111212
1000000011111111
1221121200020000\\
0000200022122111
0000000221112122
1211212200200000
0020000022211121\\
0000002021121221
1111111120000000
1122211200000200
1212221100000020\\
2111221200001000
1112122202000000
0000020021212211
0001000012112221\\
\hline
\multicolumn{1}{c}{$ A_{16,46,2}$}\\
\hline
0010202212111000
0010201121011010
0010201200202221
0010102201200111\\
0111002101102020
0112021010020102
0122202001201020
1101010010200212\\
1101020022021001
1201020000210122
1102010022012100
1212002020020202\\
1002120110010201
1200210110020101
1020220201102010
1000111201101020\\
\hline
\multicolumn{1}{c}{$ A_{16,46,3}$}\\
\hline
0001111011020011
0001211022201200
0001212001110012
0001121020111100\\
0011220112120000
0012201221120000
0112100100001212
1011100200002222\\
1202011012110000
1110012000001121
1120000120102201
1120000202020112\\
1100221011210000
1210000120202110
1220000101021022
1200022200001211\\
\hline
\multicolumn{1}{c}{$ A_{16,46,4}$}\\
\hline
0100112101200201
0110201102200101
0101002202201022
0101001220022210\\
0101001111101002
0102101210012020
0102002102122002
1011022010012010\\
1021110002100101
1122020020011010
1012010210021010
1200101102200202\\
1010210020110220
1010120021020120
1020210001202102
1020220010020221\\
\hline
\multicolumn{1}{c}{$ A_{16,46,5}$}\\
\hline
0110011010011201
0110021022102020
0120022012100011
0120021000221202\\
0001222201010220
0012022120201001
0102012220101002
1111200001020110\\
1001100222210010
1001012102020220
1002100201022201
1002200212200120\\
1002200100012212
1120100101010120
1210120010101002
1220201020101001\\
\hline
\multicolumn{1}{c}{$ A_{16,46,6}$}\\
\hline
0010121101012002
0110222020100210
0011020202222100
0011112001021002\\
0012010120022021
0012010202210202
0012020211001021
1101010201012001\\
1211000102011001
1102002110200110
1202100220100110
1100201020001122\\
1100101012120200
1200202010102022
1200201001220210
1020122020200220\\
\hline
\multicolumn{1}{c}{$ A_{16,46,7}$}\\
\hline
0110012022220010
0110022001011102
0120012000012222
0120011021100110\\
0001122220102001
0011011202010120
0101021102010210
1001100110022102\\
1001200121200021
1001200200121202
1112200010102001
1002021220202002\\
1002100122101020
1120100210201001
1210110001010210
1220202002010110\\
\hline
\multicolumn{1}{c}{$ A_{16,46,8}$}\\
\hline
0100122110021020
0120201110022010
0101101201200102
0101002120010111\\
0102001120210220
0102001202101101
0102002211010210
1011021001100201\\
1021210010011020
1222020001200101
1012010101100102
1100202220022020\\
1010110012202001
1010220002201012
1020110020021210
1020120002112002\\
\noalign{\hrule height1pt}
\end{tabular}
}
\end{table}

\begin{table}[thbp]
\caption{Mutually unbiased weighing matrices of order $16$ and weight $9$}
\label{Tab:16-2}
\centering
\medskip
{\footnotesize
\begin{tabular}{l}
\noalign{\hrule height1pt}
\multicolumn{1}{c}{$ A_{16,46,9}$}\\
\hline
0110210211000022
0120022200212100
0111100012001110
0121200001001211\\
0121100020120022
0112100021110001
0122010112000021
1201102011000021\\
1001201100112100
1022101011000012
1002222000121100
1110020100222200\\
1000021202011220
1000011220220101
1000012202102210
1000012120211002\\
\hline
\multicolumn{1}{c}{$ A_{16,46,10}$}\\
\hline
0110110200122100
0120021221000021
0111200020212002
0121010100221100\\
0112200002021220
0122200010110102
0122100002202210
1101102000111200\\
1001201212000011
1012101000211100
1002212021000011
1220010222000022\\
1000021120120012
1000022102002121
1000022211220002
1000011111002220\\
\hline
\multicolumn{1}{c}{$ A_{16,46,11}$}\\
\hline
0010121210200210
0120111002011002
0011020120001222
0011010111210100\\
0011010220101011
0012112010100220
0012020102110110
1101020210100120\\
1221000110100210
1102001101021001
1202200201011002
1100202002210201\\
1100102021002012
1200102002221100
1200101020012021
1010211002022002\\
\hline
\multicolumn{1}{c}{$ A_{16,46,12}$}\\
\hline
0100110222000221
0100210121020202
0100120122200102
0100220100111021\\
0101212012002100
0121120011002200
0102220200220011
1201220022002200\\
1011001010221020
1021002221001100
1012002001212020
1022001000122120\\
1022002012001202
1110001200110012
1010102100120011
1020011100210011\\
\hline
\multicolumn{1}{c}{$ A_{16,46,13}$}\\
\hline
0010202100220122
0010102122012200
0010101110100121
0010101222021002\\
0111002210020201
0112011001202010
0122102020020101
1121001002202020\\
1201010021122000
1001120101201010
1102020001100222
1202010000201221\\
1202020012022010
1100210102101010
1010220220010101
1000112210010102\\
\hline
\multicolumn{1}{c}{$ A_{16,46,14}$}\\
\hline
0001112020202120
0001122002020212
0001221001022022
0001222010201101\\
0011210100002211
0121200222110000
0012202200002112
1201011000001112\\
1022200122220000
1110000221201200
1110000112010022
1120012011120000\\
1100121000002111
1210000202120021
1220000210212200
1200022121110000\\
\hline
\multicolumn{1}{c}{$ A_{16,46,15}$}\\
\hline
0100110110112010
0100210200211110
0100120201121010
0100220212002202\\
0101211000120021
0111120000210021
0102220121002100
1011002022100102\\
1021002000222011
1021001021010202
1102110000220022
1012001022000211\\
1022002200110021
1210001211002100
1010202111001200
1020021112001100\\
\noalign{\hrule height1pt}
\end{tabular}
}
\end{table}

\begin{table}[thbp]
\caption{Mutually unbiased weighing matrices of order $16$ and weight $9$}
\label{Tab:161-1}
\centering
\medskip
{\footnotesize
\begin{tabular}{l}
\noalign{\hrule height1pt}
\multicolumn{1}{c}{$W_{16,562}$}\\
\hline
1102100100020111
1001010111202002
1001010220120210
1201020112101000\\
0120202010010211
0220212000020121
0210201001201011
2101001100020221\\
2001020210022110
0021122021201000
2001010122010110
0120201001101102\\
1111000200010121
1020221022202000
0220101001112001
0010222121102000\\
\hline
\multicolumn{1}{c}{$ A_{16,562,2}$}\\
\hline
0100122110021020
0120201110022010
0111020022002102
0111010010211010\\
0121010021000222
0102101210012020
0112010022020201
1011021001100201\\
1021110002100101
1122020020011010
1012010101100102
1200101102200202\\
1000102201222010
1000201200221120
1000202100212021
1000202212100202\\
\hline
\multicolumn{1}{c}{$ A_{16,562,3}$}\\
\hline
0100012202210220
0100012120022012
0100021120201022
0100022211001012\\
0110011201120020
0120120220002101
0112020110002201
1101201002110010\\
1011100012220100
1021200001220201
1001102101110020
1012202020001101\\
1022200010002122
1022100002121200
1002111001210010
1210020220002202\\
\hline
\multicolumn{1}{c}{$ A_{16,562,4}$}\\
\hline
0001111011020011
0001112020202120
0001211022201200
0001122002020212\\
0111202011000220
0121021012000120
0122002200221001
1001220221000110\\
1112001000222002
1120000120102201
1100110200111002
1210000202120021\\
1220000210212200
1220000101021022
1010120100211001
1000212112000110\\
\hline
\multicolumn{1}{c}{$ A_{16,562,5}$}\\
\hline
0011120210100022
0101200200202212
0121100102011200
0012102110200012\\
0102100201100211
0102200212011100
0102200100120222
1101012001021100\\
1022100220200022
1100021101012100
1220011010100012
1200222001011200\\
1010012022112000
1010011010200221
1010021022021010
1020022012022001\\
\hline
\multicolumn{1}{c}{$ A_{16,562,6}$}\\
\hline
0010210121220020
0010110110012101
0010110222200012
0010120201021101\\
0101210212020001
0102222000202110
0122110011020002
1101120000202220\\
1201002000122102
1021001100201110
1102001022100120
1202001000022211\\
1202002012201020
1110002100101210
1010201211010002
1020012221010001\\
\hline
\multicolumn{1}{c}{$ A_{16,562,7}$}\\
\hline
0100011111110200
0100011220001111
0100021202122200
0100022102110101\\
0110012210002102
0120220201210020
0111020101220010
1201201010002101\\
1011200020101022
1021100020012012
1001102210001201
1012100021002021\\
1012200002210210
1022202001120010
1002121010001102
1120010102220020\\
\hline
\multicolumn{1}{c}{$ A_{16,562,8}$}\\
\hline
0001121102002011
0011001111001120
0011002202101110
0011002120210202\\
0012002102022120
0012021220210100
0102011102001012
1001011202002022\\
1212100010110200
1110210001002011
1120102010210100
1100220002001221\\
1100120021120002
1220200120110100
1200220010220012
1200110020221001\\
\noalign{\hrule height1pt}
\end{tabular}
}
\end{table}

\begin{table}[thbp]
\caption{Mutually unbiased weighing matrices of order $16$ and weight $9$}
\label{Tab:161-2}
\centering
\medskip
{\footnotesize
\begin{tabular}{l}
\noalign{\hrule height1pt}
\multicolumn{1}{c}{$ A_{16,562,9}$}\\
\hline
0001212001110012
0001121020111100
0001221001022022
0001222010201101\\
0111102000122001
0121011000212001
0122002121000120
1221002022000220\\
1002220200112001
1110000221201200
1110000112010022
1120000202020112\\
1210000120202110
1200110211000120
1020120111000210
1000211100121001\\
\hline
\multicolumn{1}{c}{$ A_{16,562,10}$}\\
\hline
0010210200111201
0010220212202020
0010220100011112
0010120122100220\\
0101210100102120
0102221021020001
0112110000201120
1101002022200011\\
1101001010021202
1201001021210020
1102002001012202
1202210022020002\\
1022001112010001
1210002111020001
1010101200102110
1020022200101120\\
\hline
\multicolumn{1}{c}{$ A_{16,562,11}$}\\
\hline
0001221110210200
0011001220222001
0011012102001021
0101022220110200\\
0012001120110011
0012001202001222
0012002211210001
1211100001001012\\
1002022120220200
1110220010120100
1120101001001021
1100210020210102\\
1100110012002210
1220200202001011
1200120002012120
1200210001102220\\
\hline
\multicolumn{1}{c}{$ A_{16,562,12}$}\\
\hline
0010222200020211
0110101012101001
0110102021010110
0110201020012220\\
0120101020220210
0120212022101000
0011211000020112
1121000211102000\\
1201010100010211
1001020120020121
1001020202211002
1102200111201000\\
1202001221101000
1002010202202101
1002020102102012
1010112000020222\\
\hline
\multicolumn{1}{c}{$ A_{16,562,13}$}\\
\hline
0000000112221211
0000001012212112
0000100011211222
0000010012121122\\
0010000011122212
0100000011222121
0001000012112221
1111111120000000\\
1121222100000002
1211212200200000
1221121200020000
1112122202000000\\
1122211200000200
1212221100000020
1222112100002000
1000000011111111\\
\hline
\multicolumn{1}{c}{$ A_{16,562,14}$}\\
\hline
0011110201012200
0101100222021020
0101100110100112
0101200121011001\\
0012101101021200
0102100122212000
0112200220100012
1011200102022200\\
1102012010100021
1100021210200011
1210011002011100
1200122020100011\\
1010022001200122
1020011021022100
1020012000201212
1020021000110222\\
\hline
\multicolumn{1}{c}{$ A_{16,562,15}$}\\
\hline
0100112101200201
0110201102200101
0101101201200102
0121020000121011\\
0112020001111020
0122010000112110
0122020012200202
1011022010012010\\
1021210010011020
1222020001200101
1012010210021010
1100202220022020\\
1000101110122020
1000101222010201
1000102122001102
1000201121000212\\
\noalign{\hrule height1pt}
\end{tabular}
}
\end{table}

\begin{table}[thbp]
\caption{Mutually unbiased weighing matrices of order $16$ and weight $9$}
\label{Tab:162-1}
\centering
\medskip
{\footnotesize
\begin{tabular}{l}
\noalign{\hrule height1pt}
\multicolumn{1}{c}{$W_{16,569}$}\\
\hline
1001010111202002
0020110121101010
0120202010010211
1110000221201200\\
2122000201022002
2201010220012200
1202001000022211
2010012000220111\\
0012112010100220
2101001100020221
0220122001200021
0001122002020212\\
0201200211121000
1020212022020020
0010222121102000
1101100200102101\\
\hline
\multicolumn{1}{c}{$ A_{16,569,2}$}\\
\hline
0000000112221211
0000001012212112
0000100011211222
0000010012121122\\
0010000011122212
0100000011222121
0001000012112221
1111111120000000\\
1121222100000002
1211212200200000
1221121200020000
1112122202000000\\
1122211200000200
1212221100000020
1222112100002000
1000000011111111\\
\hline
\multicolumn{1}{c}{$ A_{16,569,3}$}\\
\hline
0010222200020211
0010220100011112
0110101012101001
0120122020011200\\
0001211022201200
0011112001021002
0102200100120222
1101200211010020\\
1221000110100210
1001020120020121
1102010101200011
1202001221101000\\
1202002012201020
1120000202020112
1010012022112000
1010121000202202\\
\hline
\multicolumn{1}{c}{$ A_{16,569,4}$}\\
\hline
0001112020202120
0011002202101110
0101120210200201
0101200121011001\\
0121210010122000
0102222000202110
0112010022020201
1201220022002200\\
1022002200110021
1110000112010022
1100120021120002
1210002111020001\\
1010011201012010
1020012000201212
1020101102000111
1000201200221120\\
\hline
\multicolumn{1}{c}{$ A_{16,569,5}$}\\
\hline
0001111011020011
0101120101012020
0101200200202212
0111210002000121\\
0012002211210001
0102221021020001
0122010000112110
1021002221001100\\
1102110000220022
1110002100101210
1210000120202110
1200210001102220\\
1010021210100102
1020022012022001
1020201110211000
1000101222010201\\
\hline
\multicolumn{1}{c}{$ A_{16,569,6}$}\\
\hline
0100212221002001
0100022102110101
0120201001101102
0120102111020200\\
0101100222021020
0111010010211010
0102101210012020
1221010010000121\\
1011021001100201
1021100020012012
1012110000120110
1022020200201011\\
1002010120011220
1100201102222000
1010022001200122
1000202212100202\\
\hline
\multicolumn{1}{c}{$ A_{16,569,7}$}\\
\hline
0010210200111201
0110102021010110
0011001111001120
0121011000212001\\
0121020000121011
0112001210220010
0122202001201020
1111002000022220\\
1001020202211002
1102001022100120
1002120110010201
1200220201002101\\
1200110020221001
1020110211100002
1000201121000212
1000212112000110\\
\hline
\multicolumn{1}{c}{$ A_{16,569,8}$}\\
\hline
0010120201021101
0100021120201022
0011220112120000
0111100200110202\\
0012001120110011
0012102110200012
0102200212011100
1101002022200011\\
1201011000001112
1012200220022002
1022100002121200
1120021011002010\\
1100012101100120
1200120002012120
1200222001011200
1010011010200221\\
\noalign{\hrule height1pt}
\end{tabular}
}
\end{table}

\begin{table}[thbp]
\caption{Mutually unbiased weighing matrices of order $16$ and weight $9$}
\label{Tab:162-2}
\centering
\medskip
{\footnotesize
\begin{tabular}{l}
\noalign{\hrule height1pt}
\multicolumn{1}{c}{$ A_{16,569,9}$}\\
\hline
0010120122100220
0110202002221002
0011001220222001
0121010021000222\\
0121021012000120
0112001101002102
0122102020020101
1201002000122102\\
1001120101201010
1222001022200002
1002020211020220
1100220220110010\\
1100110012002210
1010110200011120
1000202100212021
1000211100121001\\
\hline
\multicolumn{1}{c}{$ A_{16,569,10}$}\\
\hline
0010110110012101
0100012202210220
0110011201120020
0110220111200200\\
0001212001110012
0111020022002102
0121010100221100
1201001021210020\\
1001201212000011
1001122010101020
1102101000011012
1012202020001101\\
1022200010002122
1120000120102201
1210010102020202
1000102201222010\\
\hline
\multicolumn{1}{c}{$ A_{16,569,11}$}\\
\hline
0010110222200012
0100021202122200
0011210100002211
0101100110100112\\
0012002102022120
0012101101021200
0122200122001010
1011200020101022\\
1021100102200220
1102002001012202
1202011012110000
1110021000210101\\
1100012210021001
1200122020100011
1200220010220012
1020011021022100\\
\hline
\multicolumn{1}{c}{$ A_{16,569,12}$}\\
\hline
0100011111110200
0120101020220210
0001112102120001
0001121020111100\\
0011002120210202
0012011102201100
0102012220101002
1111200001020110\\
1001011202002022
1012100021002021
1002020102102012
1120102010210100\\
1100220002001221
1210100210001210
1220210020010011
1220000101021022\\
\hline
\multicolumn{1}{c}{$ A_{16,569,13}$}\\
\hline
0100012120022012
0100222200120120
0110102100201021
0110201020012220\\
0101101201200102
0102100201100211
0122020012200202
1121010002111000\\
1011022010012010
1021200001220201
1012210011001002
1022020121010100\\
1002010202202101
1200102220000222
1010021022021010
1000101110122020\\
\hline
\multicolumn{1}{c}{$ A_{16,569,14}$}\\
\hline
0010210121220020
0100011220001111
0110012210002102
0110120100022011\\
0001222010201101
0112020001111020
0122010112000021
1101001010021202\\
1001201100112100
1001112001010201
1202101011200100
1012200002210210\\
1022202001120010
1120020220202020
1210000202120021
1000102122001102\\
\hline
\multicolumn{1}{c}{$ A_{16,569,15}$}\\
\hline
0100022211001012
0120102002102022
0001212110001220
0001221001022022\\
0011022220010021
0101021102010210
0012001202001222
1011100012220100\\
1001010220120210
1112200010102001
1002022120220200
1120101001001021\\
1100210020210102
1210100101110002
1220220002101100
1220000210212200\\
\noalign{\hrule height1pt}
\end{tabular}
}
\end{table}

\begin{table}[thbp]
\caption{Mutually unbiased weighing matrices of order $16$ and weight $9$}
\label{Tab:163-1}
\centering
\medskip
{\footnotesize
\begin{tabular}{l}
\noalign{\hrule height1pt}
\multicolumn{1}{c}{$W_{16,695}$}\\
\hline
0022122000010221
1020221022202000
1121000211102000
2101001100020221\\
0102012002022210
1010100112002120
1010100221220001
0101012020210022\\
1010200202101220
2201100222102000
1201010100010211
0220212000020121\\
0010222121102000
0102011020110101
1020100120121002
0101022002001111\\
\hline
\multicolumn{1}{c}{$ A_{16,695,2}$}\\
\hline
0001101111120100
0001101220011011
0001201201222200
0001202101210101\\
0011102210012002
0111200102100210
0012110101200210
1021100102200220\\
1102021010012001
1110020021001022
1120010020002112
1100012210021001\\
1210010020102021
1210020002220110
1220022001100210
1200211010011002\\
\hline
\multicolumn{1}{c}{$ A_{16,695,3}$}\\
\hline
0110110200122100
0120022200212100
0111000110011102
0121000121100011\\
0121000200021222
0112000201011201
0122010112000021
1201102011000021\\
1001201212000011
1022101011000012
1002222000121100
1110020100222200\\
1000121022110020
1000112022201010
1000211021200120
1000212000112202\\
\hline
\multicolumn{1}{c}{$ A_{16,695,4}$}\\
\hline
0010012120222100
0010011111010021
0010022211201100
0010022102010222\\
0100221112020100
0120102111020200
0102202200102021
1101200020201201\\
1201200001120022
1011120000102011
1021012012010100
1112010000101012\\
1202100002021021
1202200010202210
1022020121010100
1100101200202022\\
\hline
\multicolumn{1}{c}{$ A_{16,695,5}$}\\
\hline
0110101012101001
0110202002221002
0120102002102022
0120101020220210\\
0111012001002201
0121201010210020
0112001101002102
1211001002002202\\
1001110201001102
1001020120020121
1002010120011220
1002010202202101\\
1002020211020220
1100220220110010
1020210110120010
1000122110210010\\
\hline
\multicolumn{1}{c}{$ A_{16,695,6}$}\\
\hline
0010222200020211
0120212022101000
0101011011001210
0101021020122002\\
0012211012102000
0102021002201220
0102022011110002
1111000200010121\\
1201020112101000
1102100100020111
1202001221101000
1010200120210012\\
1010112000020222
1020200101002221
1020200210220102
1020100202012210\\
\hline
\multicolumn{1}{c}{$ A_{16,695,7}$}\\
\hline
0000000112221211
0000001012212112
0000100011211222
0000010012121122\\
0010000011122212
0100000011222121
0001000012112221
1111111120000000\\
1121222100000002
1211212200200000
1221121200020000
1112122202000000\\
1122211200000200
1212221100000020
1222112100002000
1000000011111111\\
\hline
\multicolumn{1}{c}{$ A_{16,695,8}$}\\
\hline
0001102202220120
0001102120002212
0001201122001022
0001202210101012\\
0011101201100220
0012120110021002
0122100220011002
1101021001200110\\
1012200220022002
1110010012210200
1120020002120201
1100012101100120\\
1210022020011001
1220010001221200
1220020010012022
1200111002100110\\
\noalign{\hrule height1pt}
\end{tabular}
}
\end{table}

\begin{table}[thbp]
\caption{Mutually unbiased weighing matrices of order $16$ and weight $9$}
\label{Tab:163-2}
\centering
\medskip
{\footnotesize
\begin{tabular}{l}
\noalign{\hrule height1pt}
\multicolumn{1}{c}{$ A_{16,695,9}$}\\
\hline
0010012202110011
0010011220201202
0010021120101110
0010021202022021\\
0100211100202011
0110102100201021
0101202221020100
1101100012020012\\
1201100020212100
1011220011010200
1021011000101021
1102200002010122\\
1102100021102200
1212010011020100
1022020200201011
1200202122020200\\
\hline
\multicolumn{1}{c}{$ A_{16,695,10}$}\\
\hline
0100210121020202
0100120201121010
0100120122200102
0100220100111021\\
0001112102120001
0101011202110002
0012012220001120
1221200020001110\\
1011001010221020
1001022220002220
1012001022000211
1022001000122120\\
1022002012001202
1110202010002110
1120110001210001
1210100101110002\\
\hline
\multicolumn{1}{c}{$ A_{16,695,11}$}\\
\hline
0110102021010110
0110201020012220
0120202010010211
0120201001101102\\
0111022010120020
0121101002002101
0112001210220010
1001010220120210\\
1001010111202002
1001020202211002
1122002020220020
1002110210110020\\
1002020102102012
1200220201002101
1010210102001101
1000121101001201\\
\hline
\multicolumn{1}{c}{$ A_{16,695,12}$}\\
\hline
0011220000202122
0101120210200201
0012210000220221
0012220012011010\\
0012120001100121
0102112010200102
0122220001022010
1011002101021010\\
1202120020200202
1100001222021100
1100001110100222
1100002122012001\\
1220001110200101
1200002212122000
1010011201012010
1020202201011020\\
\hline
\multicolumn{1}{c}{$ A_{16,695,13}$}\\
\hline
0100110110112010
0100110222000221
0100210200211110
0100220212002202\\
0001212110001220
0011021102210001
0102022120001210
1011002022100102\\
1021002000222011
1021001021010202
1222200002110001
1012002001212020\\
1002011102220002
1110201001120001
1120120010001120
1210100210001210\\
\hline
\multicolumn{1}{c}{$ A_{16,695,14}$}\\
\hline
0110210211000022
0120021221000021
0111000222202010
0121010100221100\\
0112000122120020
0122000212100110
0122000100212202
1101102000111200\\
1001201100112100
1012101000211100
1002212021000011
1220010222000022\\
1000122001022102
1000221000021212
1000222012200021
1000111010022201\\
\hline
\multicolumn{1}{c}{$ A_{16,695,15}$}\\
\hline
0011120022021200
0011210021111000
0011110010200111
0101120101012020\\
0012110022012002
0102111001021020
0112220020200101
1101210002022020\\
1012002110100201
1120001102011010
1100002201200212
1200002100201122\\
1200001121022010
1200001200210221
1010021210100102
1020102220100101\\
\noalign{\hrule height1pt}
\end{tabular}
}
\end{table}

\begin{table}[thbp]
\caption{Mutually unbiased weighing matrices of order $17$ and weight $9$}
\label{Tab:17}
\centering
\medskip
{\footnotesize
\begin{tabular}{l}
\noalign{\hrule height1pt}
\multicolumn{1}{c}{$W_{17,33}$}\\
\hline
02020202002110022
02021120000102011
10012020200012021
11201012020002100\\
01220120010001120
02202001101002102
10011220101101000
12000102100220220\\
00211101002010202
01100001102122020
10100000112210110
10220211010000201\\
00121001021210020
12000001222021100
10100110211100002
00002110120111001\\
11022020020000212\\
\hline
\multicolumn{1}{c}{$ A_{17,33,2}$}\\
\hline
11020000011200222
10222022000102100
10102021021001001
11201000102011001\\
01100010020110122
01220011220020010
10101201010102010
12001012001021100\\
00121120202200100
10010100202120220
01102112010000011
00001002221012201\\
10010100120202012
01010200000222121
12002210202210000
01010222200001012\\
00120202122020200\\
\hline
\multicolumn{1}{c}{$ A_{17,33,3}$}\\
\hline
01220110001210001
12212110000001002
10100011021022100
00122011012110000\\
11020002002021110
10100202201011020
01011210100010012
01010102010102120\\
01012000220100211
01100100022200222
00001010100121221
12000002122012001\\
10011001212200001
11202221100000020
00001121020111100
10100120111000210\\
10221000200102202\\
\hline
\multicolumn{1}{c}{$ A_{17,33,4}$}\\
\hline
10102201210020020
01222210000012002
01101010002222010
00000111002211120\\
00120000221001112
12001002210012100
01010020111200102
01010212000101101\\
11200100210001210
10011010021000222
10010221022010010
11020022020200021\\
00121200110011200
10221001100120100
10102102102100002
01100101001112001\\
12002010101200011\\
\hline
\multicolumn{1}{c}{$ A_{17,33,5}$}\\
\hline
01220000120102201
10220102002210002
10101110002100101
01010110110020202\\
00001120011202021
12000201102222000
10012002121000120
11022001011000110\\
01101020120201010
01010011220212000
00121212001002002
11201200200021020\\
10010202010010211
00122010000201221
10100021000110222
12000100221020210\\
01102022202022000\\
\noalign{\hrule height1pt}
\end{tabular}
}
\end{table}

\begin{table}[thb]
\caption{Mutually unbiased weighing matrices of order $18$ and weight $9$}
\label{Tab:18}
\centering
\medskip
{\footnotesize
\begin{tabular}{l}
\noalign{\hrule height1pt}
\multicolumn{1}{c}{$W_{18,15}$}\\
\hline
000112010220210002
210000221201000220
020021000022202220
220120202200100010\\
100220010210011200
102001201020110100
220000011111012000
101111000210002001\\
012000210012022002
211001100002110002
001120011000021120
000100220112211000\\
010022002000012121
012121100001200010
101022021000002012
200200001202200111\\
011000210120000211
001201202001200102\\
\hline
\multicolumn{1}{c}{$ A_{18,15,2}$}\\
\hline
001002001021020111
012011001220201000
010100010020102122
001012000010201122\\
100102000202222200
012100220111020000
010100100012010111
011010122120000200\\
102200102000020121
101001201102000021
100010111011100200
120011020000002112\\
000010212002121010
122102000120011000
000010212001212001
110202220200110000\\
101121002201001000
110220010100200012\\
\hline
\multicolumn{1}{c}{$ A_{18,15,3}$}\\
\hline
102001111000020102
100202100021010202
012010002021020011
001200020120021120\\
000101102022211000
110011020110110000
100000202010221202
000121220021002002\\
120112002000002120
101221012000002001
122020020200101001
010000210201011120\\
110122001100200001
101010201222000010
011100100200120220
012200020202202020\\
010022002002100112
001000120211200110\\
\hline
\multicolumn{1}{c}{$ A_{18,15,4}$}\\
\hline
012002001101110020
100020020012012201
012000210010201011
120210011000002101\\
110020110220200020
102102120000020110
000121010011122000
001020001200111110\\
122000202201010002
101102210102000002
010010002212100120
011012000201002210\\
101200102111001000
001100220001200121
010001002120012110
000111101010210002\\
100111000020101201
110201221000020002\\
\noalign{\hrule height1pt}
\end{tabular}
}
\end{table}

\begin{table}[thbp]
\caption{Mutually unbiased weighing matrices of order $19$ and weight $9$}
\label{Tab:19}
\centering
\medskip
{\footnotesize
\begin{tabular}{l}
\noalign{\hrule height1pt}
\multicolumn{1}{c}{$W_{19}$}\\
\hline
0100200020021001212
1010010100222000022
0102002002210012200
1002000022001001121\\
0101100200020100221
0121020000000210122
0100110111011001000
0010022210200221000\\
1000200201012111000
0122011200002220000
1002121010020010010
0012000201101002022\\
0110102020102000110
1001010200201002110
1011001002110200200
1020002012100120002\\
1220102021000200200
1100220101000022001
0000121020210120002\\
\hline
\multicolumn{1}{c}{$ A_{19,2}$}\\
\hline
1022102100001110000
1100101002200202002
0120000210210001201
0101010021010100012\\
1010100011000020111
1201020000001021202
0000001111110002220
1110000002102101020\\
1000200201220102020
1000200200111210100
1020212100002220000
0001110001020211020\\
1200001020002010211
0112022021000200200
0120001020121020001
0101220110020010010\\
0012211100201001000
0001020120210000121
0011012002001002201\\
\hline
\multicolumn{1}{c}{$ A_{19,3}$}\\
\hline
1001100110021000102
0121202011110000000
1000200200022201120
1110000001202012010\\
0001020120122100001
1220011001000022001
0100020200201120101
0000001110212101020\\
0011100021010221000
0010200112000220011
0110011000101010021
0100112002002022020\\
1212002000110100100
0122010120000001110
1102021000100020202
0001201022010002102\\
1001010202000101210
1000202120201000220
1020120002010210001\\
\hline
\multicolumn{1}{c}{$ A_{19,4}$}\\
\hline
1020001201200000212
0111002201002000022
0011101100201000220
0100201102002201002\\
1202100000000210122
1012020010012000201
0000001111022102100
0001110002012012010\\
1201200121110000000
0012210020020012200
1201202012220000000
1000110020022021001\\
0120002101200210001
0001021200120210001
0000122100120000212
1110000000001111110\\
0010020020200222110
1100010010101222000
1120020022000102020\\
\hline
\multicolumn{1}{c}{$ A_{19,5}$}\\
\hline
1022100211110000000
0000121022102010002
0120210000002100122
0121020000001022012\\
0100020200202202021
0112020100011100020
0100111220201001000
0100110110020012200\\
1000200202121000220
0011102212000100100
1011001001002120200
1010012020010202002\\
1000022001220011002
1220100102200020020
1101000100100201101
0012001010020220102\\
1000201012210010010
0102002002002021210
1002000020020102111\\
\hline
\multicolumn{1}{c}{$ A_{19,6}$}\\
\hline
1000110021010010101
0010202000200011211
1011001002200021100
1120020020022200010\\
1001102000200102202
0121201200001110000
1002001100020100221
0102000110201002110\\
0010020021101120010
0100112002121001000
0010020022001212020
0111210100102002000\\
0110101211000200200
1200000212100002011
0001022011020000121
1200210001021200002\\
1000020110110011002
0012000200022110102
1102202200010020020\\
\noalign{\hrule height1pt}
\end{tabular}
}
\end{table}

\begin{table}[thbp]
\caption{Mutually unbiased weighing matrices of order $21$ and weight $9$}
\label{Tab:21}
\centering
\medskip
{\footnotesize
\begin{tabular}{l}
\noalign{\hrule height1pt}
\multicolumn{1}{c}{$W_{21}$}\\
\hline
110100100210010010002
000011002010002122100
012002122020100020000\\
100010101100000001111
001200110100101002002
010001020101201100200\\
001110012021000001020
000102200000021100112
120002002000211002001\\
100001220000101200120
120000001201100120200
100010000002220220202\\
100000202100102010210
001102021100002002020
001121000022010020010\\
011000000201020202011
012010211020010002000
110020010002020100021\\
011202200010010021000
100220000021202000102
001210020222000110000\\
\hline
\multicolumn{1}{c}{$ A_{21,2}$}\\
\hline
000001021200002220210
001020200002120021100
102020100022001000011\\
100001010020012120020
100000022002202011002
000120011100022010200\\
000100221120011000002
000100022001100101201
001101000200000112110\\
001122100010210020000
100111010010001201000
101200200101200000011\\
010010100100020120012
010001020112000002021
011000000220221000220\\
112002201210000100000
110020002001100202002
010112000020002200101\\
011200101000110011000
120000121001020000120
121012000002100002200\\
\hline
\multicolumn{1}{c}{$ A_{21,3}$}\\
\hline
111000202100002000120
000011010102000212200
001100200201001210002\\
010001110011100000110
010002000212202200010
102002000121010200010\\
120001000201202002001
121000100002001220100
100000210222100100010\\
010120002000001022201
010200100220010002022
000010221010110022000\\
110200021000021010001
001201022000010001210
010111001020200021000\\
100100120000122000202
100020011010010001220
102010002010201100002\\
001020001100200102012
001212010001020020200
001112100000010110001\\
\noalign{\hrule height1pt}
\end{tabular}
}
\end{table}

\begin{landscape}

\begin{table}[thbp]
\caption{Mutually unbiased weighing matrices of order $22$ and weight $9$}
\label{Tab:22}
\centering
\medskip
{\footnotesize
\begin{tabular}{l}
\noalign{\hrule height1pt}
\multicolumn{1}{c}{$W_{22}$}\\
\hline
0010000212201000022001
0001000201022022120000
1020102000000012002120
0000011002000102110110
1002000012122020200000\\
1110000111200100000002
0001100010012201000110
1020002000000101100211
1200221000000001100120
0001200000222010210001\\
0121021002000000002202
0110202000100200112000
0001110000021021010020
1011000220110100200000
0102121201000000010001\\
0120210210010000001020
0001020110101002001001
0010100002002010101220
1100011120000200020001
0100022022200020001100\\
1200000000210222010200
1000000200021210001012\\
\hline
\multicolumn{1}{c}{$ A_{22,2}$}\\
\hline
1000010220000000011222
0100210012200000102002
1212000002020100100001
1000221000002220020200
0000101001210122100000\\
0011000010002121210000
1101100100021000000210
1002100010110200012000
1100000220200000202101
0000110102002002220020\\
0011010001002210100001
1020000011000111020020
0100001210120002001100
1010002000010000021112
0012211101001000200000\\
0010022001020002002022
0000001020102110002012
0011021002011010000020
1221200100000002010100
0100202000110102000201\\
0100000120100021100120
0102020100202010011000\\
\hline
\multicolumn{1}{c}{$ A_{22,3}$}\\
\hline
0110100011100002100200
1001020000002110010022
0011001200210000010011
0121200010021200010000
0010210002101110002000\\
0000012221220012000000
0000011101200001102020
1002202101010000010010
0100120120001010000101
1012020200020201002000\\
1120110002002000002010
0010000102220000101012
1200001100020002200201
0100220000200122022000
0000001011000010220112\\
1110010000000020201120
0100200000002011021201
1001000020011200020202
1201002010000000120101
0000102010201101200200\\
1022001200001100101000
0001000021120121000010\\
\hline
\multicolumn{1}{c}{$ A_{22,4}$}\\
\hline
0011210000011000021001
1000001210000221011000
0012101000020000220101
1210001100010100012000
0111000202002011002000\\
0120001200010102200010
0000001012021012100200
0000120002210020020220
1002200020200210200200
0100001101100001020202\\
0102000102010200100110
0011100001200202000012
1021100100002010001001
0000010000102222002201
1100010020220120100000\\
1200020220100000120010
1100022011001000002001
0012002010002100001210
0001220102020020200010
0110020020100002011020\\
1000200010002002020122
1000112002101000200002\\
\hline
\multicolumn{1}{c}{$ A_{22,5}$}\\
\hline
1000202002002022100010
1022020000201200010100
0100100120100202002002
1220010000022010200002
1001102001200000100220\\
1000001220120020000021
0101200002200200220020
1100001102000111100000
0012022020020010021000
0120000001000120021102\\
0000121200202000022010
0000011020201002001210
0100012200021001002010
0012010000200102002120
0100020010020102210200\\
0100200221012010010000
1011000101000000200111
0102110010002200001001
1012000000010021200202
1000000210111012020000\\
0010201011020200100002
0011100202000000011102\\
\noalign{\hrule height1pt}
\end{tabular}
}
\end{table}

\begin{table}[thbp]
\caption{Mutually unbiased weighing matrices of order $22$ and weight $9$}
\label{Tab:22-2}
\centering
\medskip
{\footnotesize
\begin{tabular}{l}
\noalign{\hrule height1pt}
\multicolumn{1}{c}{$ A_{22,6}$}\\
\hline
0110221100000000021100
1000020022002121002000
0010010100010221010001
0102000010200100100221
0001001001102102010001\\
1202000111020000002100
0100100021200000010122
1002010020002010101010
1001001000020211000220
0010101200021020100010\\
1102200200101000210000
0012120000002202200200
0000011212202000200100
0010100010110111000002
0120100001000001220011\\
1001022010200000011010
0120000010102220100002
1111012000000002022000
1020101102011002000000
0000102002120000001121\\
0000010100020120201202
1200000201010020021020\\
\hline
\multicolumn{1}{c}{$ A_{22,7}$}\\
\hline
0010001001200012010220
1020002001021100010002
0000100000000211202212
1110200000201220000010
1201120020210000000100\\
1020011010000210100100
0001110002000022100202
0000000110011000221022
1101001000022100220000
1012020100102000100002\\
0010012000200111120000
1000000022101010001201
0122000122200000002020
0100100110010100010011
0001210121110000002000\\
0101002000002201011020
0110100200101000002120
1002012200012002200000
0001222012000012002000
0120020201010000120200\\
1200000010000021002221
0000102101020202020001\\
\hline
\multicolumn{1}{c}{$ A_{22,8}$}\\
\hline
0100000000002222220110
1101200000100010000121
0011000001200021101100
0000101101101020002001
1212011000002200000020\\
0112200102000120010000
1000002110001202100002
1100110000010100021200
1000021022221000020000
1000122200000020210020\\
1200012020000100002110
0102100010200011002100
0011001210000102002002
0101010020200200012200
0001100102122001000002\\
0010000202111201000010
1020021000012000110010
1000200011020001200210
0010100000020012011011
0010020121010010200002\\
0102000221120000100002
0010022000002000122201\\
\hline
\multicolumn{1}{c}{$ A_{22,9}$}\\
\hline
1020200020102000200011
1010002010100102010100
0102022100010101020000
0102000100001202010201
0010110001110200220000\\
0001012120200000000121
1000100001200021012010
0000121020000102202020
0000001010012011010021
0110210200200100200200\\
1000002002002200002222
1202000000021001201020
1100100200020010120001
1001201110001000022000
1010001022010020101000\\
0102011100022000000102
0101000020101011010002
0010020002200210200110
0100220201000220000120
1020000001210012001002\\
0121100012000020201000
0011020101022000001200\\
\noalign{\hrule height1pt}
\end{tabular}
}
\end{table}

\begin{table}[thbp]
\caption{Mutually unbiased weighing matrices of order $23$ and weight $9$}
\label{Tab:23}
\centering
\medskip
{\footnotesize
\begin{tabular}{l}
\noalign{\hrule height1pt}
\multicolumn{1}{c}{$W_{23}$}\\
\hline
01000100010020201200101
01221012000010010000010
01102001002002010000012\\
10022100010010000002220
00100000011201000011210
12000010000022010110001\\
00001121101000010120000
00120110120100020001000
10000000201002222020010\\
01202200101120000000200
00012102020001212000000
01100022200100100100001\\
01001001220000200010220
10000001020001100202011
00100210000001201122000\\
00111002100002000202200
10001000012121002000002
10020022020220001000002\\
00120200001000012200120
10000220102010200001001
11010010000200100021020\\
01000000100200022112100
10010000001110001010102\\
\hline
\multicolumn{1}{c}{$ A_{23,2}$}\\
\hline
00110001200011020210000
00000110012010212000002
10001020200010010020011\\
00000102211000100001220
11010022100100002010000
01000012000000020221110\\
10000010020001200101021
10210211011002000000000
01002200202200102100000\\
01002101001021010000010
10121000010220000010100
10110000002020001020202\\
00000012200100001112010
12202002000201000202000
10122000021012000000002\\
00102000010100000022121
10220101002100120000000
01000100000202220002201\\
01200020200000201000122
01001010020000110202020
01020200110011001000200\\
00010100100210101100100
00001000001001022122002\\
\hline
\noalign{\hrule height1pt}
\end{tabular}
}
\end{table}

\begin{table}[thbp]
\caption{Mutually unbiased weighing matrices of order $24$ and weight $9$}
\label{Tab:24}
\centering
\medskip
{\scriptsize
\begin{tabular}{l}
\noalign{\hrule height1pt}
\multicolumn{1}{c}{$W_{24}$}\\
\hline
010201001200020000201002
010102011002000011000020
011101200000100002000101
101002202000002200001002
010020102002002100020010\\
000010010111000110001010
001000220012201100000200
112100020001020000100200
100000100010110002200220
100021000000011211000010\\
010210020000012001010001
000010102002021200001001
001000010201000021020201
100002001020200002200011
001000120001200010020120\\
001200000120120010002200
001020101100000020111000
001110100200000000012012
010010000120011020020002
120011001002002000120000\\
100202000200101100100100
100000000110020021202100
100001012020200100010020
010200010010200202102000\\
\hline
\multicolumn{1}{c}{$ A_{24,2}$}\\
\hline
100000020022012010001010
001000010001010010102210
101200000000120020010012
000101010100022000020110
011000102100201100001000\\
001022000010000211200100
000120021110000002000202
010000121001100000100101
001001220201000100220000
001200100002002002022020\\
010001000000022011010220
001100001202200021100000
112002000000000101022002
100020000020021200020201
000001022100010221002000\\
110110002210000202000000
120100100000000100212001
120001000000001010100122
010020200020200002012100
001112200120100000000020\\
100210201110200000000001
010001011002111000200000
000010101021200200200002
100020010001012020001020\\
\hline
\multicolumn{1}{c}{$ A_{24,3}$}\\
\hline
100001000200000220102210
122200001210000000001020
101000010021000211000020
000010000000002001211212
010000022200022000020022\\
012200202000111001000000
100010122111000000000001
010122001011000021000000
010000200000212202001001
000100000012001210220200\\
100000221120000100020200
000001200110200000012022
100020012102020020001000
010200101100100202000002
111000000200001102210000\\
010011001002020001000101
001220120002210001000000
001202200010020010100010
100112000002010000100102
001001010010112100020000\\
012000110000200110100200
000001000001201000021112
102020000000002010202110
000121020000100010111000\\
\hline
\multicolumn{1}{c}{$ A_{24,4}$}\\
\hline
102010011000010222000000
000000100221001000012120
100000021000100010102012
001111100010000000101100
001200000111001002010200\\
102000000011202101010000
010022000210000002000111
110102002100001201000000
010001000201200210020200
001120201000002200010020\\
001012010021022000000010
100001202000020002200102
000001202020010000110011
010001100100102000202001
000110201000001110200001\\
100200010002020010100021
100120100020000102001200
010010020202020020010200
010000200001100120120020
121000020000200020022001\\
011000010002210100002002
000012022000012012000020
101200000200110001201000
010200021120200000001100\\
\hline
\multicolumn{1}{c}{$ A_{24,5}$}\\
\hline
000100010000002202012011
001120202001110001000000
102020022010002002001000
101021110000200000120000
000111000012010020000202\\
100010000000002101220011
011000020002000100110110
000110002100001012020100
001010000101202000011020
100000021101001220000010\\
100210200200010210100000
111000000002120200200020
010102000200201000001201
010001001001100112000200
012000000110000001102021\\
010202012100000000000212
102100100020000011010002
000001222020220000002200
010000000211200000202102
000010102201120020100000\\
000102201000022000120002
120000210010021100010000
012001210020000020001100
100002000020010122002020\\
\hline
\multicolumn{1}{c}{$ A_{24,6}$}\\
\hline
112102100000012000000001
000000101120020000012110
100200100010201100002020
101001000000012001110002
000000120200000011220012\\
120020000000022200020120
010000000012101210100100
011110001201020000000020
100002210200000002002012
000000010001011221200100\\
010000022101200202000002
101002020022001020001000
010020011100000100021002
001101112002000002200000
010020002000120021002200\\
000001021000110022022000
001000002001000100120111
000120220010000100210100
110201200020000010200001
001020001010200200000211\\
001212000110102000200000
120100000101101010000200
102011000010020020001010
000110200102200001022000\\
\noalign{\hrule height1pt}
\end{tabular}
}
\end{table}

\end{landscape}
\end{document}